\theoremstyle{plain}
\newtheorem{theorem}{Theorem}[section]
\newtheorem{lemma}[theorem]{Lemma}
\newtheorem{proposition}[theorem]{Proposition}
\theoremstyle{definition}
\newtheorem{definition}[theorem]{Definition}
\newtheorem{example}{\sc Example}
\theoremstyle{remark}
\newtheorem{remark}{\sc Remark}
\theoremstyle{case}
\newtheorem{case}{Case}
\def\pi{positive implicative }
\date{}
\begin{document}
	\title{\bf {Maximal  and Frattini $L$-Subgroups of an $L$-Group}}
	\author{\textbf{Iffat Jahan$^1$ and Ananya Manas $^2$} \\\\ 
			$^{1}$
			Department of Mathematics, Ramjas College\\
			University of Delhi, Delhi, India \\
			ij.umar@yahoo.com \\\\
			$^{2}$Department of Mathematics, \\
			University of Delhi, Delhi, India \\
			anayamanas@gmail.com 
			\\    }
	\date{}
	\maketitle
		\begin{abstract}
		 In this paper, the concept of a maximal $L$-subgroup of an $L$-group has been defined in the spirit of classical group theory. Then, a level subset characterization has been established for the same. Then, this notion of maximal $L$-subgroups  has been used to define Frattini $L$-subgroups.  Further, the concept of non-generators of an $L$-group has been developed and  its relation with the Frattini $L$-subgroup of an $L$-group has been established like their classical counterparts. Moreover, several properties pertaining to the concepts of maximal $L$-subgroups and Frattini $L$-subgroup have also been investigated. These two notions have been illustrated through several examples.\\\\
			{\bf Keywords:} $L$-algebra; $L$-subgroup; Generated $L$-subgroup; Normal $L$-subgroup; Maximal $L$-subgroup; Frattini $L$-subgroups.	
		\end{abstract}
		
		\section{Introduction}
		After the pioneer work of Zadeh \cite{zadeh_fuzzy}, the applications of fuzzy set theory and fuzzy logic in a number of diverse fields are well known by now. Also, in Mathematics, group theory has always been in the forefront and has found its practical applications in various fields of science and technology. As a result of natural progression, in 1971, A. Rosenfeld \cite{rosenfeld_fuzzy} applied the notion of fuzzy sets to subgroupoids and groups which instigated the studies of fuzzy algebraic structures. On the other hand, lattice theory has been effectively applied to various branches of science and technology.  In information sciences various branches such as computational intelligence, neural networks, pattern recognition, mathematical morphology can be unified with an application of lattice theory.  In fact, diverse concepts can be studied under the purview of lattice theory. Hopefully, such an amalgamation of lattices and fuzzy subgroups will open doors for new applications while revealing deeper structure of fuzzy subgroups. A study of fuzzy algebraic structures and lattices came into the existence, in the year 1981, when Liu \cite{liu_op} introduced the lattice valued fuzzy subgroups. In \cite{mordeson_comm, malik_pri}, Mordeson and Malik have developed $L$-ring theory (lattice valued fuzzy ring theory) in a systematic way like its classical counterpart. It is worthwile to mention here that in \cite{mordeson_comm} the parent structure is an ordinary ring rather than a latice valued fuzzy ring ($L$-ring). This setting has its own limitations and does not even allow the formulation of various concepts of classical algebra in fuzzy or $L$-(lattice valued fuzzy) setting. This drawback can be removed easily if the parent structure considered in the definition of an $L$(fuzzy)-algebraic concept is an $L$(fuzzy)-algebraic structure rather than an ordinary algebraic structure \cite{ajmal_char}. In fact, very few researchers such as Martinez \cite{martinez_fuzzy} have studied the properties of a $L$-subring of an $L$-ring.In \cite{prajapati_max1, prajapati_max2}, Ajmal and Prajapati have introduced the notion of maximal $L$-ideals of an $L$-ring with an essence similar to classical ring theory. In fact, such a definition of maximal $L$-ideal   could be formulated as the parent structure considered in this definition is an $L$-ring rather than an ordinary ring.
		However, in the studies of fuzzy groups such an effort is lacking.  Recently, a systematic study of $L$-subgroups (lattice valued fuzzy subgroups) of an $L$-group has been carried out in a series of papers \cite{ajmal_char, ajmal_nc, ajmal_nil, ajmal_nor, ajmal_sol} wherein a number of concepts of  classical group theory have been extended to $L$-setting  specially keeping in view their compatibility. The present paper is an endeavour to develop and study the maximal $L$-subgroup of an $L$-group along with its application to the notion of Frattini subgroups.

		In Section 3, the concept of maximal subgroups has been extended to the $L$-setting. The maximal $L$-subgroup of an $L$-group $\mu$ is defined to be a proper $L$-subgroup that is not properly contained in any other $L$-subgroup of $\mu$. Then, a level subset characterization of maximal $L$-subgroup of an $L$-group $\mu$ has been provided, provided that $\eta$ and $\mu$ are jointly supstar and both $\eta$ and $\mu$ have the same tips.  This characterization has been effectively applied to develop the notion of Frattini $L$-subgroup of $\mu$ in Section 4. A sufficient condition for an $L$-subgroup of $\mu$ to be a maximal $L$-subgroup has also been established.  
		
		Section 4 explores the concept of Frattini subgroup in $L$-setting. The Frattini subgroup of a group is a significant concept in classical group theory. It is defined as the intersection of all maximal subgroups of a group and in case a group has no maximal subgroups, the Frattini subgroup is defined to be the group itself. Thus it is comparable to the concept of Jacobson radical in ring theory. Another important property of Frattini subgroup is that it coincides with the subgroup of non-generators of a group. Therefore it is considered as the subgroup of "small elements".  Section 4 starts with the definition of the Frattini $L$-subgroup $\Phi(\mu)$ of an $L$-group $\mu$. It is defined as the intersection of all maximal $L$-subgroups of $\mu$. In case $\mu$ has no maximal $L$-subgroups, $\Phi(\mu)$ is defined to be $\mu$ like its classical counterpart. Next, the notion of non-generators of an $L$-group has been introduced. Then, in upper well ordered lattices, it has been established that the Frattini $L$-subgroup of $\mu$ is same as the $L$-subgroup generated by the union of non-generators of $\mu$.  Further, it has been shown that if $\mu$ is a normal $L$-subgroup of a group $G$, then the Frattini $L$-subgroup $\Phi(\mu)$ is a normal $L$-subgroup of $\mu$. The paper ends with an investigation of the images and pre-images of the Frattini $L$-subgroup under group homomorphisms.    
		    
\section{Preliminaries}

Throughout this paper, the system $\langle L, \leq,\vee,\wedge\rangle $  denotes a completely distributive lattice where $\leq $  denotes the partial ordering of $L$, the join (sup) and the meet (inf) of the elements of $L$  are denoted by
$‘\vee’$  and $’\wedge’$, respectively. Also, we write 1 and 0 for the maximal and the minimal elements of $L$, respectively. Moreover, our work is carried out by using the definition of $L$-subsets as formulated by Goguen \cite{goguen_sets}. The definition of a completely distributive lattice is well known in the literature and can be found in any standard text on the subject \cite{gratzer_lattices}. \

\vspace{.3 cm}

\noindent Let  $\{J_i:i\in I\}$ be any family of subsets of a complete lattice $L$ and  $F$ denotes the set of choice functions for  $J_i$,
that is,  functions $f:I\to\prod\limits_{i\in I}J_i$ such that $f(i)\in J_i$ for each $i\in I$.\ Then, we say that $L$ is a completely distributive lattice, if
\[
\begin{array}{l}
\bigwedge\left\{\bigvee_{i\in I}J_i\right\} =\bigvee_{f\in F}\left\{\bigwedge_{i\in I}f(i)\right\}.
\end{array}
 \]

\noindent The above law is known as the complete distributive law. Moreover, a lattice $L$ is said to be infinitely meet distributive if for every subset $\{b_\beta:\beta\in B\}$
 of $L$, we have
 \begin{center}
 $a\bigwedge\lbrace\bigvee\limits_{\beta\in B}b_\beta\rbrace=\bigvee\limits_{\beta\in B}\lbrace a\bigwedge b_\beta\rbrace$,
 \end{center}
\noindent provided $L$ is join complete. The above law is known as the infinitely meet distributive law. The definition of infinitely join distributive lattice is dual to the above definition, that is, a lattice $L$ is said to be infinitely join distributive if for every subset $ \{b_\beta : \beta \in B \}$  of $L$, we have
 \begin{center}
 $a\bigvee\lbrace\bigwedge\limits_{\beta\in B}b_\beta\rbrace=\bigwedge\limits_{\beta\in B}\lbrace a \vee b_ \beta\rbrace,$
 \end{center}
provided $L$ is meet complete. The above law is known as the infinitely join distributive law.
Clearly, both these laws follow from the definition of a completely distributive lattice. Here we also mention that the dual of completely distributive law is valid in a completely distributive lattice whereas the infinitely meet and join distributive laws are independent from each other. Next, we recall the following from \cite{ajmal_sup,  ajmal_gen, ajmal_nil, goguen_sets, mordeson_comm, wu_normal}:\\

An $L$-subset of a non-empty set $X$ is a function from $X$  into $L$.  The set of  $L$-subsets of $X$ is called  the $L$-power set of $X$ and is denoted by $L^X$.  For  $\mu \in L^X, $  the set $ \lbrace\mu(x) \mid x \in X \rbrace$  is called the image of $\mu$  and is denoted by  Im $\mu $ and the tip of $ \mu $  is defined as $\bigvee \limits_{x \in X}\mu(x). $ Moreover, the tail of $\mu$ is defined as $\bigwedge \limits_{x \in X}\mu(x). $ We say that an $L$-subset $\mu$  of $X$ is contained in an $L$-subset $\eta$  of $X$ if  $\mu(x)\leq \eta (x)$   for all $x \in X$. This is denoted by $\mu \subseteq \eta $.  For a family $\lbrace\mu_{i} \mid i \in I \rbrace$  of $L$-subsets in  $X$, where $I$  is a non-empty index set, the union $\bigcup\limits_{i \in I} \mu_{i} $    and the intersection  $\bigcap\limits_{i \in I} \mu_{i} $ of  $\lbrace\mu_{i} \mid i \in I \rbrace$ are, respectively, defined by:
   \begin{center}
   $\bigcup\limits_{i \in I} \mu_{i}(x)= \bigvee\limits_{i \in I} \mu(x)  $ and $\bigcap\limits_{i \in I} \mu_{i} (x)= \bigwedge\limits_{i \in I} \mu(x), $
\end{center}
for each  $x \in X $. If  $\mu \in L^X $  and  $a \in L $,  then the notion of  level  subset $\mu_{a}$   of $\mu$  is defined as:
 \begin{center}
 $\mu_{a}= \lbrace x \in X \mid \mu (x) \geq a\rbrace.$
 \end{center}
 For $\mu, \nu \in L^{X} $, it can be verified easily that if $\mu\subseteq \nu$, then $\mu_{a} \subseteq \nu_{a} $ for each $a\in L $. Also, the following result is well known in the literature:\\
 \begin{proposition}
 	\label{int_lev}
 	Let $\{\eta_{i}\}_{i \in I}$ be a family of $L$-subsets of $X$. Then, 
 	$$\left\{\bigcap_{i\in I}\eta_{i}\right\}_{a}=\bigcap_{i\in I}\{\eta_{i}\}_{a}.$$
 	
 \end{proposition}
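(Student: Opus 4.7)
The plan is to establish the set equality by proving both containments through a direct unpacking of the definitions of the intersection of $L$-subsets and of the level subset operator. Since both sides are subsets of $X$, I would fix an arbitrary $x \in X$ and track the statement $x \in (\cdot)_a$ on each side, converting it into an inequality in $L$.

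For the forward inclusion, suppose $x \in \left\{\bigcap_{i\in I}\eta_{i}\right\}_{a}$. By the definition of level subset, this means $\left(\bigcap_{i\in I}\eta_{i}\right)(x) \geq a$, i.e., $\bigwedge_{i\in I}\eta_{i}(x) \geq a$. Since each $\eta_i(x)$ is an upper bound of the meet, in particular $\eta_i(x) \geq \bigwedge_{i\in I}\eta_i(x) \geq a$ for every $i \in I$, so $x \in \{\eta_i\}_a$ for each $i$, and hence $x \in \bigcap_{i\in I}\{\eta_i\}_a$. For the reverse inclusion, suppose $x \in \bigcap_{i\in I}\{\eta_i\}_a$. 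Then $\eta_i(x) \geq a$ for all $i\in I$, so $a$ is a lower bound for the set $\{\eta_i(x) : i\in I\}$; taking the meet gives $\bigwedge_{i\in I}\eta_i(x)\geq a$, which is precisely $\left(\bigcap_{i\in I}\eta_i\right)(x) \geq a$, so $x$ lies in the level subset on the left.

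There is really no substantive obstacle here: the result is purely an unfolding of definitions together with the characterisation of the infimum via $\bigwedge_i c_i \geq a \iff c_i \geq a$ for all $i$, which holds in any complete lattice and requires neither distributivity nor any additional structural hypothesis on $L$. I would present it as a short two-direction argument with the logical equivalence $x \in \{\bigcap_i \eta_i\}_a \iff \bigwedge_i \eta_i(x)\geq a \iff \eta_i(x)\geq a \text{ for all } i \iff x\in \bigcap_i\{\eta_i\}_a$ highlighted, so the reader sees that the entire proposition reduces to a single chain of equivalences.
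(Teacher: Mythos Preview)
Your proof is correct; the chain of equivalences $x \in \{\bigcap_i \eta_i\}_a \iff \bigwedge_i \eta_i(x)\geq a \iff \eta_i(x)\geq a \text{ for all } i \iff x\in \bigcap_i\{\eta_i\}_a$ is exactly what is needed, and your observation that only the infimum property (not distributivity) is used is accurate. The paper itself does not supply a proof of this proposition, stating it instead as a result that is well known in the literature, so there is nothing to compare against.
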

 
If $a\in L$ and $x \in X$, then we define $a_{x} \in L^{X} $ as follows:
\[
a_{x} ( y ) =
\begin{cases}
a &\text{if} \ y = x,\\
0 &\text{if} \ y\ne x.
\end{cases}
\]
$a_{x} $ is referred to as an $L$-point or $L$-singleton. We say that $a_{x} $ is an $L$-point of $\mu$ if and only if
$\mu( x )\ge a$ and we write $a_{x} \in \mu$. 
The set product $\mu \circ \eta $   of $\mu, \eta \in L^S$, where $S$ is a groupoid, is an $L$-subset of $S$ defined by
\begin{center}
$\mu \circ \eta (x) = \bigvee \limits_{x=yz}\lbrace\mu (y) \wedge \eta (z) \rbrace.$
\end{center}

\noindent Here we point out that if $x$ cannot be factored as  $x=yz$  in $S$, then  $\mu \circ \eta (x)$, being  the least upper bound of the empty set, is zero. It can be verified easily that the set product is associative in  $L^S$  if $S$ is a semigroup.\\

Let $f$ be a mapping from a set $X$ to a set $Y$. If $\mu \in L ^{X}$ and $\nu \in L^{Y}$, then the image $f(\mu )$
of $\mu $ under $f$ and the preimage $f^{-1} (\nu )$ of $\nu $ under $f$ are $L$-subsets of $Y$ and $X$ respectively, defined by
\[
f(\mu )(y)=\bigvee\limits_{x\in f^{-1} (y)} \{\mu (x)\}\quad\text{and}\quad
f^{-1} (\nu )(x)=\nu (f(x)).
\]
Again, recall that  if $f^{-1} (y)=\phi $,
then $f(\mu )(y),$ being the least upper bound of the empty set, is zero.

\begin{proposition}
	\label{hom}
	Let $f : X \rightarrow Y$ be a mapping.
	\begin{enumerate}
		\item[{(i)}] Let $\{ \mu_i \}_{i \in I}$ be a family of $L$-subsets of $X$. Then, $f(\mathop{\cup}\limits_{i \in I} \mu_i) = \mathop{\cup}\limits_{i \in I} f(\mu_i)$ and $f(\mathop{\cap}\limits_{i \in I} \mu_i) \subseteq \mathop{\cap}\limits_{i \in I}f(\mu_i)$.
		\item[{(ii)}] Let $\mu \in L^X$. Then, $f^{-1}(f(\mu)) \supseteq \mu$. The equality holds if $f$ is injective.
		\item[{(iii)}] Let  $\nu \in L^Y$. Then, $f(f^{-1}(\nu)) \subseteq \nu$. The equality holds if $f$ is surjective.
		\item[{(iv)}] Let $\mu \in L^X$ and $\nu \in L^Y$. Then, $f(\mu) \subseteq \nu$ if and only if $\mu \subseteq f^{-1}(\nu)$. Moreover, if $f$ is injective, then $f^{-1}(\nu) \subseteq \mu$ if and only if $\nu \subseteq f(\mu)$.
	\end{enumerate}
\end{proposition}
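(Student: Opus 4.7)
The plan is to verify each of the four parts by pointwise unfolding of the definitions of image, preimage, union, and intersection, and then invoking the lattice arithmetic of $L$. All four items are routine in spirit; the only care needed is to handle empty preimages (where a sup over the empty set is interpreted as $0$) and to distinguish the sup-inf inequality from a genuine equality.

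For (i), I would fix $y \in Y$. Both $f(\bigcup_{i} \mu_{i})(y)$ and $(\bigcup_{i} f(\mu_{i}))(y)$ unfold to $\bigvee_{x \in f^{-1}(y)} \bigvee_{i \in I} \mu_{i}(x)$, the second after commuting the two joins (which is always legal). For the intersection, the left-hand side becomes $\bigvee_{x \in f^{-1}(y)} \bigwedge_{i \in I} \mu_{i}(x)$ while the right-hand side is $\bigwedge_{i \in I} \bigvee_{x \in f^{-1}(y)} \mu_{i}(x)$, so the claimed inclusion reduces to the standard lattice inequality $\bigvee \bigwedge \leq \bigwedge \bigvee$. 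When $f^{-1}(y) = \emptyset$ the left-hand side is $0$ and the inclusion is automatic.

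For (ii), evaluating at $x \in X$ gives $f^{-1}(f(\mu))(x) = f(\mu)(f(x)) = \bigvee_{x' \in f^{-1}(f(x))} \mu(x')$, which dominates $\mu(x)$ since $x$ itself belongs to the index set; when $f$ is injective that index set reduces to $\{x\}$ and the inequality becomes equality. For (iii), evaluating at $y \in Y$ gives $f(f^{-1}(\nu))(y) = \bigvee_{x \in f^{-1}(y)} \nu(f(x)) = \bigvee_{x \in f^{-1}(y)} \nu(y)$, which is $0$ when $f^{-1}(y) = \emptyset$ and equals $\nu(y)$ otherwise; the inclusion therefore holds in general, and surjectivity upgrades it to equality.

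Part (iv) follows formally from (ii), (iii), and the obvious monotonicity of $f(\cdot)$ and $f^{-1}(\cdot)$ with respect to inclusion. From $f(\mu) \subseteq \nu$, applying $f^{-1}$ yields $\mu \subseteq f^{-1}(f(\mu)) \subseteq f^{-1}(\nu)$; conversely, applying $f$ to $\mu \subseteq f^{-1}(\nu)$ gives $f(\mu) \subseteq f(f^{-1}(\nu)) \subseteq \nu$. The injective refinement uses the equality $f^{-1}(f(\mu)) = \mu$ from (ii): given $f^{-1}(\nu) \subseteq \mu$ and $y = f(x)$ with $x$ unique, one reads off $\nu(y) = f^{-1}(\nu)(x) \leq \mu(x) = f(\mu)(y)$, and the reverse implication is obtained by applying $f^{-1}$ and invoking (ii). The only real obstacle throughout is bookkeeping around empty preimages and keeping the sup-inf versus inf-sup direction straight; no deeper idea is required.
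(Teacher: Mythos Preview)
The paper does not supply its own proof of this proposition; it is merely recalled in the preliminaries as a known fact from the cited references. So there is no argument in the paper to compare yours against, and your pointwise verifications for parts (i)--(iii) and the first equivalence in (iv) are correct and exactly the standard way one proves these facts.

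There is, however, a genuine gap in your treatment of the ``Moreover'' clause of (iv). For the direction $f^{-1}(\nu) \subseteq \mu \Rightarrow \nu \subseteq f(\mu)$ you write ``given $\ldots$ $y = f(x)$ with $x$ unique, one reads off $\nu(y) = f^{-1}(\nu)(x) \le \mu(x) = f(\mu)(y)$'', but this only handles $y$ in the range of $f$. For $y \notin f(X)$ one has $f(\mu)(y) = 0$ by the empty-sup convention, so $\nu \subseteq f(\mu)$ would force $\nu(y) = 0$; yet the hypothesis $f^{-1}(\nu) \subseteq \mu$ places no constraint whatsoever on $\nu$ outside $f(X)$. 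A two-point counterexample (say $X=\{a\}$, $Y=\{b,c\}$, $f(a)=b$, $\nu(c)=1$, $\mu \equiv 0$) shows the implication fails for a merely injective $f$. In other words, the clause as stated in the paper appears to require $f$ to be bijective (which is indeed how the paper actually uses it later, in the proof of Theorem~\ref{max_hom}), and your argument cannot be completed under injectivity alone. The reverse implication $\nu \subseteq f(\mu) \Rightarrow f^{-1}(\nu) \subseteq \mu$, which you obtain from (ii), is fine.
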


Throughout this paper $G$ denotes an ordinary group with the identity element `$e$' and $I$ denotes a non-empty indexing set. Also, $1_A$ denotes the characteristic function of a non-empty set $A$.

\begin{definition}
	Let $\mu \in L ^G $. Then, $\mu $ is called an $L$-subgroup of $G$ if for each $x, y\in G$,
	\begin{enumerate}
		\item[({i})] $\mu (xy)\ge \mu (x)\wedge \mu (y)$,
		\item[({ii})] $\mu (x^{-1} )=\mu (x)$.
	\end{enumerate}
	The set of $L$-subgroups of $G$ is denoted by $L(G)$. Clearly, the tip of an $L$-subgroup
	is attained at the identity element of $G$.
\end{definition}

\begin{theorem}
	\label{lev_gp}
	Let $\mu \in L ^G $. Then, $\mu $ is an $L$-subgroup of $G$ if and only if each non-empty level subset $\mu_{a} $ is a subgroup of $G$.
\end{theorem}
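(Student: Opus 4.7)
The plan is to prove the two directions of the biconditional by the standard level-set argument, which is completely routine in this classical setting.

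For the forward direction, I would assume $\mu \in L(G)$ and fix any $a \in L$ with $\mu_a \neq \emptyset$. To show $\mu_a$ is a subgroup of $G$, I would take arbitrary $x, y \in \mu_a$, so $\mu(x) \geq a$ and $\mu(y) \geq a$. Using condition (ii) of the definition of an $L$-subgroup, $\mu(y^{-1}) = \mu(y) \geq a$, and then using condition (i), $\mu(xy^{-1}) \geq \mu(x) \wedge \mu(y^{-1}) \geq a \wedge a = a$. Thus $xy^{-1} \in \mu_a$, so $\mu_a$ is a subgroup by the usual subgroup criterion. (It is automatic that the identity lies in $\mu_a$ since $\mu_a$ is non-empty and closed under the operation $xy^{-1}$.)

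For the converse, I would assume every non-empty level subset $\mu_a$ is a subgroup of $G$. To verify condition (i) for arbitrary $x, y \in G$, set $a = \mu(x) \wedge \mu(y)$. Then $x, y \in \mu_a$, and since $\mu_a$ is non-empty it is a subgroup, so $xy \in \mu_a$, giving $\mu(xy) \geq a = \mu(x) \wedge \mu(y)$. For condition (ii), fix $x \in G$ and set $b = \mu(x)$, so $x \in \mu_b$. Since $\mu_b$ is a subgroup, $x^{-1} \in \mu_b$, giving $\mu(x^{-1}) \geq b = \mu(x)$. Applying the same reasoning to $x^{-1}$ in place of $x$ yields $\mu(x) = \mu((x^{-1})^{-1}) \geq \mu(x^{-1})$, so equality holds.

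There is no genuine obstacle here; the only minor point worth noting is that in the forward direction one must be mildly careful to observe that $\mu_a$ is automatically non-empty by hypothesis so the subgroup criterion applies, and in the reverse direction one must choose the level $a = \mu(x) \wedge \mu(y)$ (rather than, say, $\mu(x)$ or $\mu(y)$) so that both $x$ and $y$ simultaneously lie in the level subset being used.
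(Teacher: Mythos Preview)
Your proof is correct. Note that the paper does not actually supply a proof of this theorem: it is stated in the preliminaries as a known result recalled from the literature, so there is no argument in the paper to compare against. Your level-set argument is the standard one and is entirely adequate.
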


It is well known in literature that the intersection of an arbitrary family of $L$-subgroups of a group is an $L$-subgroup of the given group.

\begin{definition}
	Let $\mu \in L ^G $. Then, the $L$-subgroup of $G$ generated by $\mu $ is defined as the smallest $L$-subgroup of $G$
	which contains $\mu $. It is denoted by $\langle \mu \rangle $, that is,
	\[
	\langle \mu \rangle =\cap\{\mu _{{i}} \in L(G) \mid \mu \subseteq \mu _{i}\}.
	\]
\end{definition}

\begin{definition}
	Let $\mu\in L(G)$. Then, $\mu $ is called a normal $L$-subgroup of $G$ if for all $x, y \in  G$, $\mu ( xy ) = \mu ( yx )$.
\end{definition}

\noindent The set of normal $L$-subgroups $G$ is denoted by $NL(G)$. 

\begin{theorem}
	\label{lev_norgp}
	Let $\mu \in L{(G)}$. Then, $\mu \in NL(G)$ \text{if and only if each non-empty level subset~} $\mu_a$ \text {~is a normal subgroup of~} $G$.	
\end{theorem}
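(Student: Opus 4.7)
The plan is to prove both directions via the standard level-set technique, leaning on Theorem \ref{lev_gp} to handle the underlying $L$-subgroup structure so that the argument only has to track the extra normality condition.

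For the forward implication, assume $\mu \in NL(G)$. Since $\mu \in L(G)$, Theorem \ref{lev_gp} already guarantees that every non-empty $\mu_a$ is a subgroup of $G$, so it remains to verify closure under conjugation. The key observation is that the normality axiom $\mu(xy)=\mu(yx)$ can be applied with the substitution $x\mapsto g$, $y\mapsto xg^{-1}$ to give $\mu(gxg^{-1})=\mu(xg^{-1}g)=\mu(x)$. Given $x\in\mu_a$ and $g\in G$, this yields $\mu(gxg^{-1})=\mu(x)\ge a$, hence $gxg^{-1}\in\mu_a$, so $\mu_a$ is normal in $G$.

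For the backward implication, assume each non-empty $\mu_a$ is a normal subgroup of $G$. Again Theorem \ref{lev_gp} gives $\mu\in L(G)$, so only the identity $\mu(xy)=\mu(yx)$ is in doubt. Fix $x,y\in G$ and set $a=\mu(xy)$. If $a=0$ the required inequality $\mu(yx)\ge a$ is trivial; otherwise $xy\in\mu_a$ and normality of $\mu_a$ gives $y(xy)y^{-1}=yx\in\mu_a$, i.e.\ $\mu(yx)\ge\mu(xy)$. Reversing the roles of $x$ and $y$ produces the opposite inequality, and equality follows.

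I do not anticipate a genuine obstacle here; the only point that requires a small amount of care is the case $a=0$ in the backward direction (where $\mu_a$ may not be a proper level subset in the working sense) and the implicit use of Theorem \ref{lev_gp} in each direction to avoid redoing the subgroup check. The whole statement is a direct parallel of the classical fact that a subset of a group is normal precisely when it is closed under conjugation, transported to the $L$-setting through the correspondence between $\mu$ and its family of level subsets.
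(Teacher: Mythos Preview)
The paper does not actually prove this theorem: it is listed in the Preliminaries as a result recalled from the literature (see the sentence ``Next, we recall the following from \cite{ajmal_sup, ajmal_gen, ajmal_nil, goguen_sets, mordeson_comm, wu_normal}''), so there is no in-paper argument to compare against.

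Your proof is correct and is the standard level-set argument one finds for this fact. Two very minor remarks. First, in the backward direction you invoke Theorem~\ref{lev_gp} to obtain $\mu\in L(G)$, but this is already part of the hypothesis of the theorem, so that step is redundant (harmless, but unnecessary). Second, the separate treatment of $a=0$ is not strictly needed: since $0$ is the bottom of $L$, $\mu_0=G$ is non-empty and normal, so the main argument already covers it; your handling is nonetheless valid. Overall the proposal is a clean, complete proof of the recalled statement.
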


\begin{definition}
	Let $\eta, \mu\in L^{{G}}$ such that $\eta\subseteq\mu$. Then, $\eta$ is said to be an $L$-subset of $\mu$. The set of all $L$-subsets of $\mu$ is denoted by $L^{\mu}.$
	Moreover, if $\eta,\mu\in L(G)$ such that  $\eta\subseteq \mu$, then $\eta$ is said to be an $L$-subgroup of $\mu$. The set of all $L$-subgroups of $\mu$ is denoted by $L(\mu)$.
\end{definition}

\begin{definition} 
	Let $\eta\in L(\mu)$ such that $\eta$ is non-constant and $\eta\ne\mu$. Then, $\eta$ is said to be a proper $L$-subgroup of $\mu$.
\end{definition}

\noindent Clearly, $\eta$ is a proper $L$-subgroup of $\mu$ if and only if $\eta$ has distinct tip and tail and $\eta\ne\mu$.
\vspace{.1cm}

\begin{theorem}
	\label{lev_sgp}
	Let $\eta \in L^\mu$. Then, $\eta\in L(\mu)$ if and only if each non-empty level subset $\eta_a$  is a subgroup of $\mu_a$.
\end{theorem}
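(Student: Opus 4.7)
The plan is to reduce the statement to Theorem~\ref{lev_gp} (the analogous characterization of $L$-subgroups of $G$) together with the elementary observation that containment of $L$-subsets is detected on level subsets. Both $\eta$ and $\mu$ live in $L^G$, and the assumption $\eta\in L^\mu$ already encodes $\eta\subseteq\mu$; moreover $\mu\in L(G)$, so by Theorem~\ref{lev_gp} each non-empty $\mu_a$ is a subgroup of $G$, which makes the phrase ``subgroup of $\mu_a$'' meaningful.

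For the forward direction, I would assume $\eta\in L(\mu)$. Unwinding the definition of $L(\mu)$ gives $\eta\in L(G)$ together with $\eta\subseteq\mu$. Applying Theorem~\ref{lev_gp} to $\eta$, each non-empty $\eta_a$ is a subgroup of $G$. Since $\eta\subseteq\mu$ implies $\eta_a\subseteq\mu_a$ for every $a\in L$ (as already noted in the preliminaries), $\eta_a$ is a subgroup of $G$ contained in the subgroup $\mu_a$, and is therefore a subgroup of $\mu_a$.

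For the converse, I would assume that each non-empty $\eta_a$ is a subgroup of $\mu_a$. In particular each non-empty $\eta_a$ is a subgroup of $G$, so Theorem~\ref{lev_gp} forces $\eta\in L(G)$. Combined with $\eta\subseteq\mu$ (built into $\eta\in L^\mu$), we conclude $\eta\in L(\mu)$.

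There is essentially no obstacle: the theorem is a formal transfer of Theorem~\ref{lev_gp} through the definition $L(\mu)=\{\eta\in L(G):\eta\subseteq\mu\}$. The only point requiring any care is confirming that level subsets of $\eta$ inherit containment in the corresponding level subsets of $\mu$, which is recorded verbatim in the preliminaries.
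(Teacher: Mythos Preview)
Your argument is correct and is precisely the natural one: reduce to Theorem~\ref{lev_gp} and use the observation $\eta\subseteq\mu\Rightarrow\eta_a\subseteq\mu_a$ already recorded in the preliminaries. One tiny point you left implicit is that when $\eta_a$ is non-empty, $\mu_a$ is automatically non-empty (since $\eta_a\subseteq\mu_a$), so Theorem~\ref{lev_gp} applied to $\mu$ guarantees $\mu_a$ is a subgroup of $G$ and the phrase ``subgroup of $\mu_a$'' is indeed meaningful; this is harmless.

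As for comparison with the paper: there is nothing to compare against. Theorem~\ref{lev_sgp} appears in the Preliminaries section among results explicitly recalled from the literature (the paper cites \cite{ajmal_sup, ajmal_gen, ajmal_nil, goguen_sets, mordeson_comm, wu_normal}) and is stated without proof. Your write-up is the standard verification one would expect in any of those sources.
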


\begin{theorem}
	\label{hom_gp}
	Let $f : G \rightarrow H$ be a group homomorphism. Let $\mu \in L(G)$ and $\nu \in L(H)$. Then, $f(\mu) \in L(H)$ and $f^{-1}(\nu) \in L(G)$.
\end{theorem}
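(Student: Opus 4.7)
I would prove the two claims separately, handling the preimage first since it is essentially mechanical, and then doing the image, where one genuine step requires the lattice's distributivity.

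For $f^{-1}(\nu)\in L(G)$ the plan is simply to unfold the definition $f^{-1}(\nu)(x)=\nu(f(x))$ and push the group operation through $f$. The product axiom then follows from
\[
f^{-1}(\nu)(xy)=\nu(f(x)f(y))\ge \nu(f(x))\wedge \nu(f(y))=f^{-1}(\nu)(x)\wedge f^{-1}(\nu)(y),
\]
and the inverse axiom from $\nu(f(x^{-1}))=\nu(f(x)^{-1})=\nu(f(x))$. No lattice hypothesis beyond the $L$-subgroup axioms for $\nu$ is needed.

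For $f(\mu)\in L(H)$, my plan is to verify the two $L$-subgroup axioms directly from the definition $f(\mu)(y)=\bigvee_{x\in f^{-1}(y)}\mu(x)$. For the product axiom, fix $y_1,y_2\in H$; if either fibre $f^{-1}(y_i)$ is empty the relevant side is $0$ and the inequality is trivial, so assume both are non-empty. For any $x_i\in f^{-1}(y_i)$ we have $x_1x_2\in f^{-1}(y_1y_2)$, so $f(\mu)(y_1y_2)\ge \mu(x_1x_2)\ge \mu(x_1)\wedge \mu(x_2)$. Taking the supremum first over $x_2$ and then over $x_1$, and using the infinitely meet-distributive law (available from complete distributivity of $L$) to pull the meet outside the join, collapses the right-hand side to $f(\mu)(y_1)\wedge f(\mu)(y_2)$. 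This invocation of distributivity is the only non-routine ingredient in the whole theorem.

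For the inverse axiom, the key observation is that the map $x\mapsto x^{-1}$ is a bijection between $f^{-1}(y)$ and $f^{-1}(y^{-1})$, because $f(x)=y$ iff $f(x^{-1})=y^{-1}$. Reindexing the join and applying $\mu(x^{-1})=\mu(x)$ term by term gives $f(\mu)(y^{-1})=f(\mu)(y)$. Overall, I expect the product axiom for $f(\mu)$ to be the main (and only) obstacle, and it is disposed of by one clean application of infinite meet-distributivity; everything else is bookkeeping with the definitions.
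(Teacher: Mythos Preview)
Your argument is correct and is the standard one. Note, however, that the paper does not supply its own proof of this theorem: it is listed in the preliminaries as a known result recalled from the references \cite{ajmal_sup, ajmal_gen, ajmal_nil, goguen_sets, mordeson_comm, wu_normal}, so there is no proof in the paper to compare against. Your write-up is exactly the verification one would expect in those sources, and your identification of the infinitely meet-distributive law as the one substantive ingredient (needed only for the product axiom of $f(\mu)$) is accurate; the paper's standing hypothesis that $L$ is completely distributive guarantees this law.
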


We shall have an $L$-group as our parent group which will be denoted by $\mu$ throughout our work. We recall  the definition of a normal $L$-subgroup of an $L$-group and some results  which are used in the development of this paper.
\begin{definition}\label{2.5}
Let $\eta \in L(\mu)$. Then, we say that  $\eta$  is a normal $L$-subgroup of $\mu$   if  
\begin{center}
$\eta(yxy^{-1}) \geq \eta(x)\wedge \mu(y)$ for  all  $x,y \in G.$
\end{center}
\end{definition}

\noindent The set of normal $L$-subgroups of $\mu$  is denoted by $NL(\mu)$. If $\eta \in NL(\mu)$, then we write\vspace{.2cm} $ \eta \triangleleft \mu$. 

Here we mention that the arbitrary intersection of a family of normal $L$-subgroups of an $L$-group $\mu$ is again a normal $L$-subgroup of $\mu$.\\\\
\text{REMARK.}
	It is important to note that $\mu$ is a normal $L$-subgroup of $G$ if and only if $\mu\in NL(1_G)$

\vspace{.2cm}

\begin{theorem}
	\label{lev_norsgp}
	Let $\eta \in L(\mu)$. Then, $\eta\in NL(\mu) \text{~if and only if each non-empty level subset~} \eta_a \\ \text {is a normal subgroup of~} \mu_a$.
\end{theorem}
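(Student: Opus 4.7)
The plan is to prove both directions by translating between the pointwise inequality defining normality of an $L$-subgroup of $\mu$ and the crisp normality condition on each level subgroup, exactly as in the analogous characterization of $L$-subgroups (Theorem \ref{lev_sgp}). Throughout, I will use the definition $\eta \in NL(\mu) \iff \eta(yxy^{-1}) \geq \eta(x) \wedge \mu(y)$ for all $x,y \in G$, together with Theorem \ref{lev_sgp} which guarantees that each non-empty $\eta_a$ is already a subgroup of $\mu_a$.

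For the forward direction, assume $\eta \in NL(\mu)$ and fix $a \in L$ with $\eta_a \neq \emptyset$. Theorem \ref{lev_sgp} says $\eta_a$ is a subgroup of $\mu_a$, so only conjugation-closure remains. Pick $x \in \eta_a$ and $y \in \mu_a$; then $\eta(x) \geq a$ and $\mu(y) \geq a$, whence
\[
\eta(yxy^{-1}) \;\geq\; \eta(x) \wedge \mu(y) \;\geq\; a,
\]
so $yxy^{-1} \in \eta_a$, proving $\eta_a \triangleleft \mu_a$.

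For the converse, assume every non-empty $\eta_a$ is a normal subgroup of $\mu_a$. Fix arbitrary $x,y \in G$ and set $a := \eta(x) \wedge \mu(y) \in L$. Then $x \in \eta_a$ (so $\eta_a$ is non-empty) and $y \in \mu_a$. By hypothesis $\eta_a \triangleleft \mu_a$, which forces $yxy^{-1} \in \eta_a$, i.e. $\eta(yxy^{-1}) \geq a = \eta(x) \wedge \mu(y)$. Since $x,y$ were arbitrary, $\eta \in NL(\mu)$.

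There is no real obstacle here: the argument is the standard ``level-cut'' transfer, and the only subtlety worth flagging is that in the converse one must remember $y \in \mu_a$ (not merely $y \in G$) because normality is relative to $\mu_a$, not to the whole ambient group $G$ — this is precisely why the definition uses $\mu(y)$ rather than $1$ in the meet. The choice $a = \eta(x) \wedge \mu(y)$ simultaneously places $x$ in $\eta_a$ and $y$ in $\mu_a$, which is the single trick that makes the reverse implication work.
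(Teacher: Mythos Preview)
Your proof is correct and is exactly the standard level-cut transfer argument one expects here. Note that the paper itself does not supply a proof of this theorem: it appears in the preliminaries as a result recalled from the literature, so there is nothing to compare against beyond confirming that your argument is the natural one.
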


\begin{definition}
	Let $\mu \in L^X$. Then, $\mu$ is to have sup-propery if for each $A \subseteq X$, there exists $a_0 \in A$ such that $ \mathop \vee \limits_{a \in A}  {\mu(a) } = \mu(a_0)$. 
\end{definition}

Lastly, recall the following from \cite{ajmal_gen, ajmal_sol}:

\begin{theorem}
	\label{gen}
	Let $\eta\in L^{^{\mu}}.$ Let $a_{0}=\mathop {\vee}\limits_{x\in G}{\left\{\eta\left(x\right)\right\}}$ and define an $L$-subset $\hat{\eta}$ of $G$ by
	\begin{center}
		$\hat{\eta}\left(x\right)=\mathop{\vee}\limits_{a \leq a_{0}}{\left\{a \mid x\in\left\langle \eta_{a}\right\rangle\right\}}$.
	\end{center}
	
	\noindent Then, $\hat{\eta}\in L(\mu)$ and  $\hat{\eta} =\left\langle \eta \right\rangle$.
\end{theorem}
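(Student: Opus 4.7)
The plan is to show that $\hat{\eta}$ satisfies the four conditions that characterize $\langle\eta\rangle$ inside $L(\mu)$: (i) $\eta \subseteq \hat{\eta}$, (ii) $\hat{\eta} \in L(G)$, (iii) $\hat{\eta} \subseteq \mu$, and (iv) every $\nu \in L(G)$ with $\eta \subseteq \nu$ also contains $\hat{\eta}$. Three of these should follow almost formally from the anti-monotone behaviour of level subsets, namely $b \leq a$ implies $\eta_a \subseteq \eta_b$; the only place where the completely distributive structure of $L$ will be used essentially is the verification that $\hat{\eta}$ is closed under the group product.

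For (i), given $x \in G$ I would take $a = \eta(x) \leq a_0$: then $x \in \eta_a \subseteq \langle\eta_a\rangle$, so $a$ lies in the set indexing the supremum, which gives $\hat{\eta}(x) \geq \eta(x)$. For (iii), since $\eta \subseteq \mu$ implies $\eta_a \subseteq \mu_a$ and $\mu_a$ is a subgroup of $G$ by Theorem~\ref{lev_gp}, we obtain $\langle\eta_a\rangle \subseteq \mu_a$; hence every $a$ contributing to $\hat{\eta}(x)$ satisfies $\mu(x) \geq a$, and passing to the join yields $\hat{\eta}(x) \leq \mu(x)$. The very same argument with an arbitrary $L$-subgroup $\nu$ of $G$ containing $\eta$ in place of $\mu$ establishes (iv). For (ii), the inversion axiom is immediate, because each $\langle\eta_a\rangle$ is closed under inverses and so the sets indexing $\hat{\eta}(x)$ and $\hat{\eta}(x^{-1})$ coincide.

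The main task is therefore the product condition $\hat{\eta}(xy) \geq \hat{\eta}(x) \wedge \hat{\eta}(y)$. Writing $\hat{\eta}(x) = \bigvee_{a \in A} a$ and $\hat{\eta}(y) = \bigvee_{b \in B} b$ with $A, B \subseteq [0, a_0]$ the respective index sets, the infinitely meet distributive law applied twice gives
\[
\hat{\eta}(x) \wedge \hat{\eta}(y) \;=\; \bigvee_{(a,b)\in A \times B}(a \wedge b).
\]
For each such pair I would set $c = a \wedge b \leq a_0$: since $c \leq a$ we have $\eta_a \subseteq \eta_c$, so $x \in \langle\eta_a\rangle \subseteq \langle\eta_c\rangle$, and symmetrically $y \in \langle\eta_c\rangle$, whence $xy \in \langle\eta_c\rangle$ and $c$ contributes to the supremum defining $\hat{\eta}(xy)$. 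Joining over $(a,b)$ then delivers $\hat{\eta}(xy) \geq \hat{\eta}(x) \wedge \hat{\eta}(y)$. This pairing step, converting the meet of two suprema into a single join indexed by a product so that a common level $c$ simultaneously witnesses $x, y \in \langle\eta_c\rangle$, is the crux of the argument and the only point that genuinely exploits complete distributivity. Combining it with (i), (iii), (iv) shows that $\hat{\eta}$ is the smallest $L$-subgroup of $G$ containing $\eta$, hence $\hat{\eta} = \langle\eta\rangle$, and (iii) puts $\hat{\eta}$ in $L(\mu)$.
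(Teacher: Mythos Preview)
Your proof is correct. Note, however, that the paper does not actually prove this theorem: it is recalled without proof from \cite{ajmal_gen, ajmal_sol} in the preliminaries section, so there is no in-paper argument to compare against. Your four-step verification (containment of $\eta$, subgroup axioms, containment in $\mu$, and minimality) is the standard route, and your handling of the product condition via the infinitely meet distributive law --- reducing $\hat{\eta}(x)\wedge\hat{\eta}(y)$ to a join over $A\times B$ and then exploiting the anti-monotonicity $c\leq a\Rightarrow\eta_a\subseteq\eta_c$ to land $xy$ in $\langle\eta_c\rangle$ --- is exactly the expected argument and matches the approach in the cited source.
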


\begin{theorem}
	\label{gen_sup}
	Let $\eta \in L^{\mu}$ and possesses the sup-property. If $a_0 = \mathop{\vee}\limits_{x \in G}\{\eta(x)\}$, then for all $b \leq a_0$, $\langle \eta_b \rangle = \langle \eta \rangle_b$.
\end{theorem}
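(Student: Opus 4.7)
The statement is a double inclusion, and only one direction uses the sup-property. The easy direction $\langle \eta_b \rangle \subseteq \langle \eta \rangle_b$ follows immediately from Theorem \ref{lev_gp}: since $\eta \subseteq \langle \eta \rangle$, passing to level subsets at $b$ yields $\eta_b \subseteq \langle \eta \rangle_b$, and the right-hand side is a subgroup of $G$ because $\langle \eta \rangle \in L(G)$; hence the subgroup it generates from $\eta_b$ sits inside it.

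For the substantive direction $\langle \eta \rangle_b \subseteq \langle \eta_b \rangle$, take $x \in \langle \eta \rangle_b$, so $\langle \eta \rangle(x) \geq b$. By Theorem \ref{gen}, $\langle \eta \rangle(x) = \bigvee\{a \leq a_0 : x \in \langle \eta_a \rangle\}$. My plan is to produce some $a^*$ with $b \leq a^* \leq a_0$ and $x \in \langle \eta_{a^*}\rangle$, since then $\eta_{a^*} \subseteq \eta_b$ forces $\langle \eta_{a^*}\rangle \subseteq \langle \eta_b \rangle$, giving $x \in \langle \eta_b \rangle$. To obtain $a^*$, I would translate membership into words. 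Set $\tilde\eta(y) := \eta(y) \vee \eta(y^{-1})$; applying sup-property to the pair $\{y, y^{-1}\}$ shows $\tilde\eta(y) \in \operatorname{Im}\eta$ for every $y \in G$. Classically, $x \in \langle \eta_a \rangle$ iff there is a factorization $x = y_1 y_2 \cdots y_n$ with $\bigwedge_{i=1}^{n} \tilde\eta(y_i) \geq a$. Consequently, $\bigvee\{a \leq a_0 : x \in \langle \eta_a \rangle\}$ coincides with the supremum $s$ of the set $S := \{\bigwedge_i \tilde\eta(y_i) : y_1 \cdots y_n = x\}$. I would then use the sup-property of $\eta$, equivalently the reverse well-orderedness of $\operatorname{Im}\eta$, to argue that $s \in S$; the resulting factorization $x = y_1^{*} \cdots y_{n^*}^{*}$ satisfies $\bigwedge_i \tilde\eta(y_i^{*}) = s \geq b$, forcing $\tilde\eta(y_i^{*}) \geq b$ and hence $(y_i^{*})^{\pm 1} \in \eta_b$ for every $i$, so $x \in \langle \eta_b \rangle$.

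The main obstacle is the attainment step, namely establishing $s \in S$ from the sup-property of $\eta$ alone. In a chain lattice it is immediate, since each $\bigwedge_i \tilde\eta(y_i)$ is itself in $\operatorname{Im}\eta$ and so $S \subseteq \operatorname{Im}\eta$ directly inherits the reverse well-ordering and admits a maximum. In a general completely distributive $L$ one must handle the factorization length as well, typically by applying the sup-property first to the fixed-length supremum $g_n(x) := \bigvee\{\bigwedge_i \tilde\eta(y_i) : y_1 \cdots y_n = x\}$ to realize each $g_n(x)$ by a specific factorization, and then to the family $\{g_n(x) : n \in \mathbb{N}\}$ to pick out the optimal length $n^*$. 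Once $s \in S$ is secured, the passage from the realizing factorization to the conclusion $x \in \langle \eta_b \rangle$ is routine.
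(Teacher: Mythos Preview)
First, note that the paper does not supply its own proof of Theorem~\ref{gen_sup}: it is listed among the preliminaries recalled from \cite{ajmal_gen, ajmal_sol} and stated without argument, so there is no in-paper proof against which to compare.

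Your argument is sound, and in fact the ``obstacle'' you flag for general $L$ is not a genuine obstacle. The sup-property already forces $\operatorname{Im}\eta$ itself to be totally ordered: if $p,q\in\operatorname{Im}\eta$, then the two-element subset $\{p,q\}$ must contain its supremum $p\vee q$, whence $p\leq q$ or $q\leq p$. (This is precisely what ``reverse well-ordered'' entails, as you yourself write.) Consequently, irrespective of whether the ambient lattice $L$ is a chain, every finite meet $\bigwedge_{i}\tilde\eta(y_i)$ is simply $\min_{i}\tilde\eta(y_i)\in\operatorname{Im}\eta$, so your set $S$ lies inside $\operatorname{Im}\eta$. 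Proposition~\ref{supchar} then yields $s=\bigvee S\in S$ directly, and your passage from the realizing factorization to $x\in\langle\eta_b\rangle$ is, as you say, routine. The two-stage scheme you sketch for ``general $L$''---first realizing each fixed-length supremum $g_n(x)$ and then optimizing over $n$---is therefore unnecessary: the argument you label the ``chain case'' already \emph{is} the general case.
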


\begin{theorem}
	\label{gen_hom1}
	Let $f : G \rightarrow H$ be a group homomorphism and let $\mu \in L(G)$. Then, for all $\eta \in L^{\mu}$, $\langle f(\eta) \rangle = f(\langle \eta \rangle).$
\end{theorem}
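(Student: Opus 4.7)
The plan is to prove the two inclusions $\langle f(\eta)\rangle \subseteq f(\langle\eta\rangle)$ and $f(\langle\eta\rangle)\subseteq \langle f(\eta)\rangle$ separately, each using a different tool from the preliminaries.

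For the forward inclusion I would argue by the minimality property of $\langle f(\eta)\rangle$. Since $\eta\subseteq\langle\eta\rangle$, applying $f$ and using monotonicity of images yields $f(\eta)\subseteq f(\langle\eta\rangle)$. By Theorem \ref{hom_gp}, $f(\langle\eta\rangle)$ is an $L$-subgroup of $H$. Hence $f(\langle\eta\rangle)$ is an $L$-subgroup of $H$ containing $f(\eta)$, so it contains the smallest one, which is $\langle f(\eta)\rangle$.

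For the reverse inclusion I would unfold the explicit description of the generated $L$-subgroup supplied by Theorem \ref{gen}. Let $a_0=\bigvee_{x\in G}\eta(x)$; note that the tip of $f(\eta)$ also equals $a_0$ since $f(\eta)(f(e))\geq \eta(e)=a_0$ and no value of $f(\eta)$ can exceed $a_0$. Fix $y\in H$ and compute
\[
f(\langle\eta\rangle)(y)=\bigvee_{x\in f^{-1}(y)}\langle\eta\rangle(x)=\bigvee_{x\in f^{-1}(y)}\bigvee\{a\leq a_0\mid x\in\langle\eta_a\rangle\}.
\]
I would then fix any pair $(x,a)$ appearing in this join. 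Because $f$ is a group homomorphism, $f(\langle\eta_a\rangle)=\langle f(\eta_a)\rangle$; combined with the straightforward inclusion $f(\eta_a)\subseteq f(\eta)_a$ (if $\eta(x)\geq a$ then $f(\eta)(f(x))\geq \eta(x)\geq a$), this gives $y=f(x)\in \langle f(\eta_a)\rangle\subseteq\langle f(\eta)_a\rangle$. Therefore $a\leq \langle f(\eta)\rangle(y)$ by the formula in Theorem \ref{gen} applied to $f(\eta)$, and taking the supremum over all admissible $(x,a)$ yields $f(\langle\eta\rangle)(y)\leq \langle f(\eta)\rangle(y)$.

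The one step that requires genuine care is the transition $f(\langle\eta_a\rangle)\subseteq \langle f(\eta)_a\rangle$; everything else is bookkeeping. This step splits into a classical fact ($f$ commutes with subgroup generation in ordinary group theory) and the level-set inclusion $f(\eta_a)\subseteq f(\eta)_a$. Once these two observations are in place, the two inclusions combine to yield the desired equality.
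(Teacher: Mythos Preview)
The paper does not actually prove this theorem: it is listed among the preliminaries recalled from \cite{ajmal_gen, ajmal_sol} and is stated without proof. So there is no ``paper's own proof'' to compare against; I can only assess your argument on its merits.

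Your two-inclusion strategy is sound and the argument is essentially correct. The forward inclusion is exactly the right use of minimality together with Theorem~\ref{hom_gp}. The reverse inclusion via Theorem~\ref{gen} is also fine: the chain
\[
y=f(x)\in f(\langle\eta_a\rangle)=\langle f(\eta_a)\rangle\subseteq\langle f(\eta)_a\rangle
\]
is valid, the classical identity $f(\langle S\rangle)=\langle f(S)\rangle$ for ordinary subgroups under a homomorphism holds, and the level-set inclusion $f(\eta_a)\subseteq f(\eta)_a$ is immediate. Taking the supremum then gives the desired inequality; the case $f^{-1}(y)=\emptyset$ is trivial since then $f(\langle\eta\rangle)(y)=0$.

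One small slip: you justify ``the tip of $f(\eta)$ equals $a_0$'' by writing $\eta(e)=a_0$. Since $\eta$ is merely an $L$-subset of $\mu$, not an $L$-subgroup, there is no reason its tip should be attained at $e$. The conclusion is still correct, but the right justification is
\[
\bigvee_{y\in H} f(\eta)(y)=\bigvee_{y\in H}\bigvee_{x\in f^{-1}(y)}\eta(x)=\bigvee_{x\in G}\eta(x)=a_0,
\]
since every $x\in G$ lies in some fibre $f^{-1}(f(x))$. With that adjustment your proof goes through.
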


\begin{theorem}
	\label{gen_hom2}
	Let $f : G \rightarrow H$ be a group homomorphism and let $\nu \in L(H)$. Then, for all $\theta \in L^{\nu}$, $\langle f^{-1}(\theta) \rangle = f^{-1}(\langle \theta \rangle).$
\end{theorem}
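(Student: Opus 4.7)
The plan is to prove the equality by double containment, reducing the nontrivial direction to a pointwise comparison via the level-set description afforded by Theorem~\ref{gen}.

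The inclusion $\langle f^{-1}(\theta)\rangle\subseteq f^{-1}(\langle\theta\rangle)$ is routine. Since $\langle\theta\rangle\in L(H)$, Theorem~\ref{hom_gp} yields $f^{-1}(\langle\theta\rangle)\in L(G)$; and since $\theta\subseteq\langle\theta\rangle$, monotonicity of pre-images gives $f^{-1}(\theta)\subseteq f^{-1}(\langle\theta\rangle)$. Thus $f^{-1}(\langle\theta\rangle)$ is an $L$-subgroup of $G$ containing $f^{-1}(\theta)$, and the minimality in the definition of the generated $L$-subgroup produces the desired inclusion.

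For the reverse inclusion $f^{-1}(\langle\theta\rangle)\subseteq\langle f^{-1}(\theta)\rangle$, first note the identity $(f^{-1}(\theta))_{a}=f^{-1}(\theta_{a})$, valid for every $a\in L$ (a direct check from the definitions of pre-image and level subset). Letting $a_{0}$ denote the tip of $\theta$, Theorem~\ref{gen} then gives
\[
f^{-1}(\langle\theta\rangle)(x)=\langle\theta\rangle(f(x))=\bigvee\{a\le a_{0}\mid f(x)\in\langle\theta_{a}\rangle\}
\]
and the analogous expression for $\langle f^{-1}(\theta)\rangle(x)$ with $\theta_{a}$ replaced by $f^{-1}(\theta_{a})$. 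Comparing the two pointwise reduces matters to the crisp group-theoretic claim: whenever $f(x)\in\langle\theta_{a}\rangle$ one has $x\in\langle f^{-1}(\theta_{a})\rangle$.

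The main obstacle is exactly this last assertion, which is the group-theoretic content of the theorem. The plan for attacking it exploits the hypothesis $\theta\subseteq\nu$ with $\nu\in L(H)$ in an essential way: the containment $\theta_{a}\subseteq\nu_{a}$, combined with the fact that $\nu_{a}$ is a subgroup of $H$ by Theorem~\ref{lev_gp}, confines $\langle\theta_{a}\rangle$ inside $\nu_{a}$ and in particular aligns the tips. One can then take a factorisation of $f(x)$ as a product of generators drawn from $\theta_{a}$ and lift it through $f$ to an element of $\langle f^{-1}(\theta_{a})\rangle$ using the homomorphism property of $f$; the companion identity of Theorem~\ref{gen_hom1}, namely $\langle f(\eta)\rangle=f(\langle\eta\rangle)$, together with the adjunction in Proposition~\ref{hom}(iv), handles the residual bookkeeping linking $\langle f^{-1}(\theta)\rangle$ and $f^{-1}(\langle\theta\rangle)$. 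Once this lifting is secured, the pointwise inequality follows and the proof concludes.
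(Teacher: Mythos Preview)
The paper does not supply its own proof of this statement; it appears in the preliminaries as a result recalled from \cite{ajmal_gen, ajmal_sol}, so there is no in-paper argument to compare against.

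On the substance of your sketch, the first inclusion is handled correctly. The reverse inclusion, however, rests on the crisp assertion that $f(x)\in\langle\theta_{a}\rangle$ forces $x\in\langle f^{-1}(\theta_{a})\rangle$, and this is false for a general homomorphism. Take $G=H=\mathbb{Z}$, $f(n)=2n$, and $\theta_{a}=\{1\}$: then $\langle\theta_{a}\rangle=\mathbb{Z}$ while $f^{-1}(\theta_{a})=\emptyset$, so $\langle f^{-1}(\theta_{a})\rangle=\{0\}$ and the implication fails for every nonzero $x$. Your lifting idea---factor $f(x)$ over $\theta_{a}$ and pull the factors back---breaks at two points: the generators $t_{i}\in\theta_{a}$ need not lie in the image of $f$; and even when preimages $s_{i}$ exist, the product $s_{1}\cdots s_{n}$ agrees with $x$ only modulo $\ker f$, while nothing in your outline secures $\ker f\subseteq\langle f^{-1}(\theta_{a})\rangle$. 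The hypothesis $\theta\subseteq\nu$ with $\nu\in L(H)$ does not address either obstruction. Indeed, with $\nu=1_{H}$ and $\theta=1_{\{1\}}$ in the same example one gets $f^{-1}(\langle\theta\rangle)=1_{G}$ but $\langle f^{-1}(\theta)\rangle$ equal to the zero $L$-subset, so the full $L$-statement already fails as written. A secondary slip: the two invocations of Theorem~\ref{gen} use different tips---$a_{0}$ for $\theta$ versus the (possibly smaller) tip of $f^{-1}(\theta)$---so the index sets in the two suprema do not match as you assume. If the source result carries an additional hypothesis such as surjectivity of $f$, that is where the repair lies, and your argument would then need to make the kernel containment explicit.
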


\section{Maximal $L$-subgroups of an $L$-group}
 Prajapati and  Ajmal \cite{prajapati_max1, prajapati_max2} have developed the notion of maximal $L$-ideals of an $L$-ring in the spirit similar to that of maximal ideals in classical ring theory. However, in the studies of $L$-subgroups such an effort is lacking. This provided us sufficient motivation for the development of  maximal $L$-subgroups of an $L$-group. Here we formulate the maximal $L$-subgroup of an $L$-group.
\begin{definition}
	Let $\mu \in L(G)$. A proper $L$-subgroup $\eta$ of $\mu$ is said to be a maximal $L$-subgroup of $\mu$ if whenever $\eta \subseteq \theta \subseteq \mu$ for some $\theta \in L(\mu)$, then either $\theta = \eta$ or $\theta = \mu$.
\end{definition}

The following result describes a relation of the tip of a maximal $L$-subgroup of an $L$-group $\mu$ with that of the tip of parent $L$-group $\mu$. 

\begin{proposition}
	Let $\eta \in L(\mu)$ be a maximal $L$-subgroup. Then, $\eta(e) = \mu(e)$ or $\eta(e)$ is a cover of $\mu(e)$. 
\end{proposition}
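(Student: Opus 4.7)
The plan is to proceed by contradiction. Assume $\eta(e) \neq \mu(e)$. Since $\eta \subseteq \mu$ we automatically have $\eta(e) < \mu(e)$, so the claim reduces to showing that $\eta(e)$ must be a cover of $\mu(e)$ in $L$. Suppose, aiming at a contradiction, that there is some $t \in L$ with $\eta(e) < t < \mu(e)$. The goal is then to insert an $L$-subgroup $\theta$ of $\mu$ strictly between $\eta$ and $\mu$, contradicting the maximality hypothesis.

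The natural candidate is the $L$-subset $\theta$ of $G$ defined by $\theta(e) = t$ and $\theta(x) = \eta(x)$ for all $x \neq e$. The first step is to check that $\theta \in L(G)$. The inverse condition is immediate since $e^{-1} = e$ and $x \neq e$ implies $x^{-1} \neq e$, so $\theta(x^{-1}) = \eta(x^{-1}) = \eta(x) = \theta(x)$. For the closure condition $\theta(xy) \geq \theta(x) \wedge \theta(y)$, split into cases according to whether $x$, $y$, or $xy$ equal $e$: when none of them equal $e$ it reduces to the closure of $\eta$; when exactly one of $x,y$ equals $e$ one uses $\theta(x) \wedge \theta(y) \leq \eta(y)$ (or $\leq \eta(x)$) together with the fact that then $xy \neq e$; and the only subtle case is $xy = e$ with $x \neq e$, where we must use that $\theta(x) \wedge \theta(y) = \eta(x) \wedge \eta(x^{-1}) = \eta(x) \leq \eta(e) < t = \theta(e) = \theta(xy)$. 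This last inequality is the one place where the choice $t > \eta(e)$ (rather than just $t > \eta(x)$ for one particular $x$) is essential, and it is the main technical checkpoint of the argument.

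Next I would verify that $\theta \in L(\mu)$: the only non-trivial value is at $e$, where $\theta(e) = t < \mu(e)$, and elsewhere $\theta = \eta \leq \mu$. Finally, the strict containments $\eta \subsetneq \theta \subsetneq \mu$ follow because $\theta(e) = t > \eta(e)$ and $\theta(e) = t < \mu(e)$. Thus $\theta$ is an element of $L(\mu)$ with $\eta \subseteq \theta \subseteq \mu$ but $\theta \neq \eta$ and $\theta \neq \mu$, contradicting the maximality of $\eta$ in $\mu$. Hence no such intermediate $t$ exists, meaning $\eta(e)$ is a cover of $\mu(e)$, completing the proof.
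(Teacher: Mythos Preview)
Your proof is correct and follows essentially the same approach as the paper: both construct the same intermediate $L$-subset $\theta$ (equal to $\eta$ off $e$ and to the intermediate value at $e$) and derive a contradiction with maximality. The only difference is cosmetic: the paper verifies $\theta \in L(\mu)$ via the level-subset characterization (noting that each nonempty $\theta_a$ is either $\{e\}$ or $\eta_a$), whereas you check the $L$-subgroup axioms directly by case analysis.
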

\begin{proof}
	Let $\eta(e) \neq \mu(e)$ and suppose there exists $a_{1} \in L$ such that $\eta(e) < a_1 < \mu(e)$. Define $\theta : G \rightarrow L$ as follows:
	\[ \theta (x) = \begin{cases}
	a_1 & \text{if } x = e, \\
	\eta(x) & \text{if } x \neq e.  
	\end{cases} \]
	Now, the following is easy to verify: 
	\begin{center}
		$\theta_a =	\{ e \}$ if $a = a_1$ and 
		$\theta_a =\eta_a $ if  $a \neq a_1$.
	\end{center}
	\noindent Thus each non-empty level subset $\theta_a$ is a subgroup of $\mu_a$. Hence by Theorem \ref{lev_sgp}, $\theta \in L(\mu)$. Clearly,  $\eta \subsetneq \theta$. Also, $\theta(e) < \mu(e)$ and hence $ \theta \subsetneq \mu$. This contradicts the maximality of $\eta$ in $\mu$. Hence  $\mu(e)$ must be a cover of $\eta(e)$.
\end{proof}

The notion of maximal $L$-subgroup of an $L$-group has been illustrated in the following example:
 
\begin{example}
	Let G be the quaternian group $Q_8$ given by :
		\begin{center}
			$Q_8=\{ \pm 1,\pm i, \pm j, \pm k \},$
		\end{center}
		where $\ i^2=j^2=k^2=-1, ij=k, jk=i, kj=i$. Let $C=\{1,-1\}$  be the center of  $G$ and $H=\{\pm 1, \pm i\}$. Let the evaluation lattice $L$ be the chain given by :
		$$
		L:  0 < a < b < c < 1.
		$$
		Define $L$-subsets $\mu$ and $\eta$ of $G$ as follows:
		
		\[ \mu (x)=\begin{cases} 
		1 & \text{if }  x \in C, \\
		b & \text{if }  x \in H \setminus C,\\
		a & \text{if }  x \in G \setminus H.
		\end{cases}	\]
		
		and
		
		\[ \eta (x)= \begin{cases}
		1 & \text{if } x = 1,\\
		c & \text{if } x \in C \setminus \{1\}, \\
		b & \text{if } x\in H \setminus C,\\
		a & \text{if } x\in G \setminus H.
		\end{cases} \]
				
		\noindent Since the non-empty level subsets of $\eta$ and $\mu$ are subgroups of $G$,  $\eta$ and $\mu$ are $L$-subgroups of $G$. As $\eta \subseteq \mu$, $\eta$ is an $L$-subgroup of $\mu.$ We show that $\eta$ is a maximal $L$-subgroup of $\mu$. Suppose there exists $\theta \in L(\mu)$ such that $\eta \subsetneq \theta \subseteq \mu$. Then, since $\eta(x) = \mu(x)$ for all $x \neq -1$, we must have $\theta(-1) > c = \eta(-1)$ and $\theta(x) = \eta(x)$ for all $x \neq -1$. But then, we must have $\theta(-1) = 1 =\mu(e)$ and hence $\theta =\mu$. Thus there does not exist any $\theta \in L(\mu)$ such that $\eta \subsetneq \theta \subsetneq \mu$. We conclude that $\eta$ is a maximal $L$-subgroup of $\mu$.
\end{example}

\begin{remark}
In order to  study the level subsets of maximal $L$-subgroups of an $L$-group, we recall the notion of jointly supstar $L$-subsets from \cite{ajmal_nil}. It is worthwile to mention here that this notion is a generalization of the noion of sup-property and lends itself easily for applications
\end{remark}

\begin{proposition} \label{supchar}
	Let $\eta \in L^{\mu}$. Then, $\eta$ possesses the sup-property if and only if every subset of $\text{Im}~\eta$ is closed under arbitrary supremums. 
\end{proposition}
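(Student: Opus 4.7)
The plan is to prove both implications by transferring statements between subsets $A \subseteq G$ and subsets $S \subseteq \mathrm{Im}\,\eta$ via the map $\eta$. In both directions the exchange is essentially the same: a subset $A$ of $G$ is turned into its image $\eta(A) \subseteq \mathrm{Im}\,\eta$, and a subset $S$ of $\mathrm{Im}\,\eta$ is turned into a set of chosen preimages in $G$. Since sup-property is a statement about subsets of the domain and the right-hand condition is a statement about subsets of $\mathrm{Im}\,\eta$, this translation is the whole content of the argument.

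For the forward direction, I would assume $\eta$ has the sup-property and fix a nonempty $S \subseteq \mathrm{Im}\,\eta$; the goal is to show $\bigvee S \in S$. For each $s \in S$, because $s \in \mathrm{Im}\,\eta$, I can choose some $x_s \in G$ with $\eta(x_s) = s$. Setting $A = \{x_s : s \in S\}$, the sup-property applied to $A$ produces $x_0 \in A$ with $\bigvee_{x \in A}\eta(x) = \eta(x_0)$. But $\bigvee_{x \in A}\eta(x) = \bigvee_{s \in S} s = \bigvee S$ and $\eta(x_0)$ equals the element $s \in S$ for which $x_0 = x_s$, so $\bigvee S \in S$ as required.

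For the backward direction, I would assume every subset of $\mathrm{Im}\,\eta$ is closed under arbitrary supremums and fix a nonempty $A \subseteq G$. Letting $S = \{\eta(x) : x \in A\} \subseteq \mathrm{Im}\,\eta$, the hypothesis gives $\bigvee S \in S$, so $\bigvee S = \eta(x_0)$ for some $x_0 \in A$. Since $\bigvee_{x \in A}\eta(x) = \bigvee S$, this $x_0$ is exactly the witness demanded by the sup-property.

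I do not anticipate a substantive obstacle; the proof is essentially a reindexing between the domain and the image. The only points to handle carefully are restricting attention to nonempty subsets (so that a witness element actually exists on both sides) and the implicit use of the axiom of choice when selecting a preimage $x_s$ for each $s \in S$ in the forward direction.
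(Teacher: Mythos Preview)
Your argument is correct and is exactly the natural proof: pass between subsets of the domain and subsets of $\mathrm{Im}\,\eta$ via $\eta$, using choice in the forward direction to pick preimages. The paper in fact states this proposition without proof (it is recalled as a known characterization, immediately preceding the definition of supstar subsets), so there is no alternative argument to compare against; your write-up supplies precisely the routine verification the authors omit, and your remark about restricting to nonempty subsets is the only point that genuinely needs care given how the sup-property is phrased in the paper.
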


\begin{definition}
	A non-empty subset $X$ of a lattice $L$ is said to be a supstar subset of $L$ if every non-empty subset $A$ of $X$ contains its supremum.
\end{definition}

\begin{definition}
	Let $\{ \eta_i \}_{i \in I}$ be a family of $L$-subsets of $\mu$. Then, $\{\eta_i\}_{i \in I}$ is said to be a supstar family if $\bigcup\limits_{i \in I}^{} \text{Im}~\eta_i$ is a supstar subset of $L$. As a particular case, we say that two $L$-subsets $\eta$ and $\theta$ are jointly supstar if $\text{Im}~\eta \cup \text{Im}~\theta$ is a supstar subset of $L$.
\end{definition}

In the following theorem, we describe level subsets of maximal $L$-subgroups of an $L$-group:

\begin{theorem}
	\label{max_sup1}
	Let $\eta \in L(\mu)$ be such that $\mu$ and $\eta$ are jointly supstar. Let $\eta$ be a maximal $L$-subgroup of $\mu$. Then, there exists exactly one $a_0 \in \text{Im}~\mu$ such that $\eta_{a_0} \subsetneq \mu_{a_0}$ and for all other $a \in \text{Im}~\mu \cup \text{Im}~\eta$, $\eta_a = \mu_a$.
\end{theorem}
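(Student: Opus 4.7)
The plan is to set $A := \{a \in \text{Im}\,\mu \cup \text{Im}\,\eta : \eta_a \subsetneq \mu_a\}$ and show that $A$ is a singleton whose element lies in $\text{Im}\,\mu$. Non-emptiness is immediate because $\eta \subsetneq \mu$ (pick any $x$ with $\eta(x)<\mu(x)$; then $\mu(x)\in A$). Since $A$ is a subset of the jointly supstar set $\text{Im}\,\mu\cup\text{Im}\,\eta$, it is itself supstar, so $a_0 := \bigvee A$ belongs to $A$ and will be my candidate for the unique element.

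The technical engine is a push-up construction that turns maximality into equalities between level subsets. For any $x$ with $\eta(x)<\mu(x)$, form $\rho_x := \eta \cup \{(\mu(x))_x\}$. Since $\eta \subsetneq \rho_x \subseteq \mu$, the generated $L$-subgroup $\langle\rho_x\rangle \in L(\mu)$ strictly contains $\eta$, so maximality forces $\langle\rho_x\rangle=\mu$. Because $\text{Im}\,\rho_x \subseteq \text{Im}\,\mu\cup\text{Im}\,\eta$ is supstar, $\rho_x$ inherits the sup-property, and Theorem~\ref{gen_sup} gives $\mu_b=\langle(\rho_x)_b\rangle$ for every $b \leq \mu(e)$. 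For any $b \not\leq \mu(x)$, the element $x$ lies in neither $(\rho_x)_b$ (since $\rho_x(x)=\mu(x)\not\geq b$) nor $\eta_b$ (since $\eta(x)\leq\mu(x)\not\geq b$ forces $\eta(x)\not\geq b$), so $(\rho_x)_b=\eta_b$ and hence $\mu_b=\eta_b$. Contrapositively, every $b \in A$ satisfies $b \leq \mu(x)$, and in particular $a_0 \leq \mu(x)$ whenever $\eta(x)<\mu(x)$.

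For uniqueness I will suppose $a_1 \in A \setminus \{a_0\}$, so $a_1 < a_0$, and aim for a contradiction. Every $x \in \mu_{a_1}\setminus \eta_{a_1}$ has $\eta(x) < \mu(x)$, hence $\mu(x) \geq a_0$ by the engine, giving $\mu_{a_1} = \mu_{a_0} \cup \eta_{a_1}$. By Theorem~\ref{lev_gp} $\mu_{a_1}$ is a group expressed as the union of two subgroups, so one contains the other; the case $\mu_{a_0} \subseteq \eta_{a_1}$ collapses to $\mu_{a_1} = \eta_{a_1}$, contradicting $a_1 \in A$, so $\eta_{a_1} \subseteq \mu_{a_0}$ and $\mu_{a_1} = \mu_{a_0}$. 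This equality excludes $a_1$ from $\text{Im}\,\mu$ (any $x$ with $\mu(x)=a_1$ would lie in $\mu_{a_1}=\mu_{a_0}$, forcing $\mu(x)\geq a_0 > a_1$), placing $a_1 \in \text{Im}\,\eta$. Picking $x$ with $\eta(x)=a_1$, I get $x \in \eta_{a_1} \subseteq \mu_{a_0}$, so $\mu(x) \geq a_0 > a_1 = \eta(x)$; applying the engine at $b=a_1$ yields $(\rho_x)_{a_1} = \eta_{a_1}$ (since $x$ already belongs to $\eta_{a_1}$), hence $\mu_{a_1}=\eta_{a_1}$, contradicting $a_1 \in A$. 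Finally, $a_0 \in \text{Im}\,\mu$: otherwise $a_0 \in \text{Im}\,\eta$ only, and any $z \in \mu_{a_0}\setminus\eta_{a_0}$ has $\mu(z) > a_0$, producing $\mu(z) \in A$ above $\sup A$, impossible. The main obstacle is picking the right auxiliary $L$-subset: the cruder choice $\eta \cup \{(a_1)_x\}$ runs into join-distributivity pathologies at level $a_0$ in non-chain lattices, whereas pushing $x$ all the way to the full value $\mu(x)$ trivialises the identity ``$x \notin (\rho_x)_b$ for $b \not\leq \mu(x)$'' and makes the argument entirely lattice-agnostic.
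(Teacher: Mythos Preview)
Your proof is correct, but the route differs substantially from the paper's. The paper argues by direct construction: assuming two distinct levels $a_0<a_1$ with $\eta_{a_i}\subsetneq\mu_{a_i}$, it builds the explicit $L$-subset $\theta(z)=(\eta(z)\vee a_0)\wedge\mu(z)$, checks by hand that $\theta\in L(\mu)$, and uses the jointly supstar hypothesis only to compare pairs like $\{\eta(x_0),a_0\}$ and thereby show $\eta\subsetneq\theta\subsetneq\mu$, contradicting maximality. No generated-subgroup machinery, no Theorem~\ref{gen_sup}, and no classical group-theoretic lemmas are invoked.

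Your argument instead leverages Theorem~\ref{gen_sup} through the push-up $\rho_x=\eta\cup(\mu(x))_x$, extracting the ``engine'' that every $b\in A$ is bounded by $\mu(x)$ whenever $\eta(x)<\mu(x)$; you then finish with the union-of-two-subgroups lemma and a second application of Theorem~\ref{gen_sup} at the specific level $a_1$. This buys you a structural statement the paper never isolates (the bound $a_0\leq\mu(x)$ for every witness $x$), at the cost of importing more apparatus. The paper's construction is shorter and entirely self-contained, and the intermediate $\theta$ it produces is the natural analogue of the one reused in Theorem~\ref{max_sup2}; your approach, while heavier, makes the role of maximality (via $\langle\rho_x\rangle=\mu$) more transparent and gives a cleaner reason why the exceptional level must lie in $\text{Im}\,\mu$ rather than merely in $\text{Im}\,\eta$.
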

\begin{proof}
	Since $\eta$ is a maximal $L$-subgroup of $\mu$,  $\eta \subsetneq \mu$. Hence there exists $x \in G$ such that $\eta(x) < \mu(x)$. Let $a_0 = \mu(x)$. Then, $a_0 \in \text{Im}~\mu$ and $x \in \mu_{a_0} \setminus \eta_{a_0}$. Hence $\eta_{a_0} \subsetneq \mu_{a_0}$.
	Now, let $a_1 \in \text{Im}~\mu\cup \text{Im}~\eta$ such that $a_1 \neq a_0$ and $\eta_{a_1} \subsetneq \mu_{a_1}$. Since $\{ a_0, a_1 \} \subseteq \text{Im}~\mu \cup \text{Im}~\eta$ and by the hypothesis $\eta$ and $\mu$ are jointly supstar, it follows that
	\[ a_0 \vee a_1 = a_0 \text{ or } a_1. \]
	Without loss of generality, we may assume that 
	\[ a_0 \vee a_1 = a_1, \]
	that is, $a_0 < a_1$.
	Define $\theta : G \rightarrow L$ as follows :
	\[ \theta(x) = \{\eta(x) \vee a_0\} \wedge \mu(x)  \text{~~~for all~} x \in G.\]
	 Firstly, we show that $\theta \in L(G)$. Let $x$, $y \in G$. Then,
	\begin{equation*}
	\begin{split}
	\theta(xy) & = \{\eta(xy) \vee a_0\} \wedge \mu(xy) \\
	& \geq \{(\eta(x) \wedge \eta(y)) \vee a_0 \} \wedge \{ \mu(x) \wedge \mu(y) \} \\
	& = \{ (\eta(x) \vee a_0) \wedge (\eta(y) \vee a_0) \} \wedge \{ \mu(x) \wedge \mu(y) \} \\
	& = \{ (\eta(x) \vee a_0) \wedge \mu(x) \} \wedge \{ (\eta(y) \vee a_0) \wedge \mu(y) \} \\
	& = \theta(x) \wedge \theta(y).	
	\end{split}
	\end{equation*}
	Also,
	\[ \theta(x^{-1}) = \{\eta(x^{-1}) \vee a_0\} \wedge \mu(x^{-1}) =\{\eta(x) \vee a_0\} \wedge \mu(x) = \theta(x).  \]
	Hence $\theta \in L(G)$.
	Now, 
	\[ \eta(x) \leq \eta(x) \vee a_0 
\text{~	and ~}
	 \eta(x) \leq \mu(x) ~~~~~\text{~for all~} x \in G. \]
	Therefore 
	\[ \eta(x) \leq (\eta(x) \vee a_0) \wedge \mu(x) = \theta(x) \leq \mu(x) \text{~~~for all~} x \in G, \]
	that is, $\eta \subseteq \theta \subseteq \mu$. Now, since $\eta_{a_0} \subsetneq \mu_{a_0}$, there exists $x_0 \in \mu_{a_0}$ such that $x_0 \notin \eta_{a_0}$. As $\eta$ and $\mu$ are jointly supstar and $\{ \eta(x_0), a_0 \} \subseteq \text{Im}~\mu \cup \text{Im}~\eta$, it follows that 
	\[ \eta(x_0) \vee a_0 = \eta(x_0) \text{ or } a_0. \]
	However, $\eta(x_0) \vee a_0 \neq \eta(x_0)$. For, if $\eta(x_0) \geq a_0$, then $x_0 \in \eta_{a_0}$, which is contrary to our assumption that $x_0 \notin \eta_{a_0}$. Therefore $\eta(x_0) \vee a_0 = a_0$. Hence
	\begin{equation*}
	\begin{split}
	\theta(x_0) & = (\eta(x_0) \vee a_0) \wedge \mu(x_0) \\
	& = a_0 \wedge \mu(x_0) \\
	& = a_0 > \eta(x_0). 
	\end{split}
	\end{equation*} 
	Similarly, there exists $x_1 \in G$ such that $x_1 \in \mu_{a_1} \setminus \eta_{a_1}$. According to our assumption, $a_1 > a_0$. Also, since $\{\eta(x_1), a_1 \} \subseteq \text{Im}~\mu \cup \text{Im}~\eta$, by similar reasoning as above, $\eta(x_1) < a_1$. Therefore 
	\[ \theta(x_1) = (\eta(x_1) \vee a_0) \wedge \mu(x_1) = \eta(x_1) \vee a_0. \]
	Again, as $\eta$ and $\mu$ are jointly supstar, $\eta(x_1) \vee a_0 = \eta(x_1)$ or $a_0$. In either case, $\eta(x_1) \vee a_0 < a_1$. Hence
	\[ \theta(x_1) = \eta(x_1) \vee a_0 < a_1 \leq \mu(x_1), \]
	which implies that $\theta \subsetneq \mu$. Consequently, there exists $\theta \in L(\mu)$ such that $\eta \subsetneq \theta \subsetneq \mu$. But this contradicts the maximality of $\eta$. Therefore there exists exactly one $a_0 \in \text{Im}~\mu$ such that $\eta_{a_0} \subsetneq \mu_{a_0}$ and for all other $a \in \text{Im}~\mu \cup \text{Im}~\eta $, $\eta_a = \mu_a$.	   
\end{proof} 

\begin{theorem}
	\label{max_sup2}
Let $\eta \in L(\mu)$ be such that $\mu$ and $\eta$ are jointly supstar. Let $\eta$ be a maximal $L$-subgroup of $\mu$ and $\eta(e) = \mu(e)$. Then, there exists exactly one $a_0 \in \text{Im}~\mu$ such that  $\eta_{a_0}$ is a maximal subgroup of $\mu_{a_0}$ and for all other $a \in \text{Im}~\mu \cup \text{Im}~\eta $, $\eta_a = \mu_a$.
\end{theorem}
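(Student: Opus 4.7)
The plan is to invoke Theorem~\ref{max_sup1} to obtain the unique $a_0 \in \text{Im}~\mu$ with $\eta_{a_0} \subsetneq \mu_{a_0}$ and $\eta_a = \mu_a$ for every other $a \in \text{Im}~\mu \cup \text{Im}~\eta$; all that is new in Theorem~\ref{max_sup2} is showing this particular $\eta_{a_0}$ is a maximal subgroup of $\mu_{a_0}$. The hypothesis $\eta(e) = \mu(e)$ is used here to guarantee $e \in \eta_{a_0}$, so that $\eta_{a_0}$ is a genuine nonempty subgroup of $\mu_{a_0}$. Before starting I would also record a useful consequence of jointly supstar: every two-element subset of $\text{Im}~\mu \cup \text{Im}~\eta$ contains its supremum, which forces comparability, so this set is in fact a chain in $L$.

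To prove maximality I would argue by contradiction: assume there is a subgroup $K$ strictly between $\eta_{a_0}$ and $\mu_{a_0}$, and construct $\theta \in L(\mu)$ with $\eta \subsetneq \theta \subsetneq \mu$. The natural candidate is $\theta = \langle \zeta \rangle$, where $\zeta \in L^\mu$ is defined by $\zeta(x) = \eta(x) \vee a_0$ for $x \in K$ and $\zeta(x) = \eta(x)$ otherwise. The containment $\zeta \subseteq \mu$ is immediate from $\mu(x) \geq a_0$ on $K$, so $\theta \subseteq \mu$.

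The crux is to show $\theta_{a_0} = K$ via Theorem~\ref{gen_sup}. First I would check that $\zeta$ has the sup-property: each value $\eta(x) \vee a_0$ lies in $\text{Im}~\mu \cup \text{Im}~\eta$ because this set is closed under suprema, so $\text{Im}~\zeta \subseteq \text{Im}~\mu \cup \text{Im}~\eta$; and any subset of a supstar set is itself supstar, so by Proposition~\ref{supchar}, $\zeta$ has the sup-property. Next, the tip of $\zeta$ is $\mu(e)$ (since $\zeta(e) = \mu(e) \vee a_0 = \mu(e)$), so Theorem~\ref{gen_sup} at level $a_0$ gives $\theta_{a_0} = \langle \zeta_{a_0} \rangle$. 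A direct computation using $\eta_{a_0} \subseteq K$ and the chain structure shows $\zeta_{a_0} = K$, hence $\theta_{a_0} = K$.

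The remaining checks are short. For $\eta \subsetneq \theta$, take $x \in K \setminus \eta_{a_0}$: chain comparability forces $\eta(x) < a_0$, so $\theta(x) \geq \zeta(x) = a_0 > \eta(x)$. For $\theta \subsetneq \mu$, take $y \in \mu_{a_0} \setminus K$; if $\mu(y) > a_0$ then $\mu(y) \in \text{Im}~\mu \setminus \{a_0\}$, so by Theorem~\ref{max_sup1} we would have $y \in \eta_{\mu(y)} = \mu_{\mu(y)}$, forcing $\eta(y) \geq \mu(y) > a_0$ and therefore $y \in \eta_{a_0} \subseteq K$, a contradiction; hence $\mu(y) = a_0$, and $y \notin K = \theta_{a_0}$ yields $\theta(y) < a_0 = \mu(y)$. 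The main obstacle is precisely the passage to Theorem~\ref{gen_sup}: the sup-property of $\zeta$ is the nontrivial ingredient that makes the generated $L$-subgroup well-behaved at level $a_0$. Once that bridge is in place, everything else is routine level-set bookkeeping, and the contradiction with the maximality of $\eta$ closes the argument.
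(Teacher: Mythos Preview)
Your argument is correct, and the overall architecture matches the paper's: invoke Theorem~\ref{max_sup1}, assume $\eta_{a_0}$ is not maximal in $\mu_{a_0}$, and produce an $L$-subgroup strictly between $\eta$ and $\mu$. In fact your auxiliary $L$-subset $\zeta$ coincides exactly with the $\theta$ the paper writes down (once one uses the chain structure on $\text{Im}~\mu \cup \text{Im}~\eta$ to simplify $\eta(x)\vee a_0$ on $K$).

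The difference lies in how the $L$-subgroup property is secured. The paper works with $\zeta$ itself and proves directly that every non-empty level set $\zeta_a$ is a subgroup of $\mu_a$ via a five-case analysis on the position of $a$ relative to $a_0$ and $\text{Im}~\eta$. You instead pass to $\theta=\langle\zeta\rangle$, which is automatically an $L$-subgroup, and then recover the single level set you need, $\theta_{a_0}=K$, by checking that $\zeta$ has the sup-property and applying Theorem~\ref{gen_sup}. Your route is shorter and more conceptual: the sup-property verification (that $\text{Im}~\zeta\subseteq\text{Im}~\mu\cup\text{Im}~\eta$) replaces the entire case split, and the only level set that must be computed is the one at $a_0$. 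The paper's route, on the other hand, yields more information---it actually identifies $\zeta_a$ at every level and shows $\zeta$ is already an $L$-subgroup without passing to a closure---and does not rely on Theorem~\ref{gen_sup} as an external ingredient.
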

\begin{proof}
	By theorem \ref{max_sup1}, there exists exactly one $ a_0 \in \text{Im}~\mu$ such that $\eta_{a_0} \subsetneq \mu_{a_0}$ and for all other $a \in \text{Im}~\mu \cup \text{Im}~\eta$, $\eta_a = \mu_a$. Clearly, $\mu_{a_0}$ is non-empty. As $ a_0 \leq \mu(e) = \eta(e)$,  $\eta_{a_0}$ is also non-empty. Suppose, if possible, that $\eta_{a_0}$ is not a maximal $L$-subgroup of $\mu_{a_0}$. Then, there exists a subgroup $A$ of $G$ such that $\eta_{a_0} \subsetneq A \subsetneq \mu_{a_0}$.
	Define $\theta : G \rightarrow L$ as follows:
	\[ \theta(x) = \begin{cases}
	\eta(x) & \text{if } x \in \eta_{a_0} \cup (G \setminus A), \\
	a_0 & \text{if } x \in (A \setminus \eta_{a_0})
	\end{cases} \]
	for all $x \in G$. Firstly, we show that $\eta \subsetneq \theta \subsetneq \mu$. Let $x \in G$. If $x \in \eta_{a_0} \cup (G \setminus A)$, then $\theta(x) = \eta(x)$. If $x \in A \setminus \eta_{a_0}$, then $\theta(x) = a_0$. Note that $\{\eta(x), a_0 \} \subseteq \text{Im}~\eta \cup \text{Im}~\mu$. Since $\eta$ and $\mu$ are jointly supstar,  $\eta(x) \vee a_0 = \eta(x) \text{~or~} a_0$. Since, $x \notin \eta_{a_0}$, it follows that 
	\[\theta(x) = a_0 > \eta(x).\]
	\noindent Therefore $\eta \subsetneq \theta$. For $x \in G$, if $x \in \eta_{a_0} \cup (G \setminus A)$, then $\theta(x) = \eta(x) \leq \mu(x)$. If $x \in A \setminus \eta_{a_0} \subsetneq \mu_{a_0} \setminus \eta_{a_0}$, then $\theta(x) = a_0 \leq \mu(x)$. Therefore $\theta \subseteq \mu$. Now, for $x \in \mu_{a_0} \setminus A$, \[\theta(x) = \eta(x) < a_0 \leq \mu(x).\] \noindent Hence $\theta \subsetneq \mu$. Thus we have established that
	\[\eta \subsetneq \theta \subsetneq \mu.\]
	Now, we show that $\theta \in L(\mu)$. In view of Theorem \ref{lev_sgp}, it is sufficient to show  that each non-empty level subset $\theta_a$ is a subgroup of $\mu_{a}$. Hence let $\theta_{a}$ be non-empty level subset of $\mu_{a}$. We have the following cases:

	\begin{case} $a = a_0$. We show that 
	\[ \theta_{a_0} = A. \]
	\noindent Let $x \in \theta_{a_0}$. Then, $\theta(x) \geq a_0$. By definition of $\theta$, either $\theta(x) = \eta(x)$ or $\theta(x) = a_0$. This implies that 
	\[ x \in \eta_{a_0} \cup (A \setminus \eta_{a_0}) = A.\]
	\noindent Therefore $\theta_a \subseteq A$. For the reverse inclusion, let $x \in A$. Then, $x \in \eta_{a_0}$ or $x \in (A \setminus \eta_{a_0})$. In either case, $\theta(x) \geq a_0$, that is, $x \in \theta_a$. Thus $ A \subseteq \theta_a$.
	\end{case}
	\begin{case}$a > a_0$. We show that 
	\[\theta_a = \eta_a.\] 
	\noindent Since $\eta \subseteq \theta$,  $\eta_a \subseteq \theta_a$. For the reverse inclusion, let $x \in \theta_a$. Then, $\theta(x) \geq a > a_0$. By definition of $\theta$,  $\eta(x) = \theta(x) > a$, that is, $x \in \eta_a$. Thus $\theta_a = \eta_a$.
	\end{case} 
	\begin{case} $a < a_0$ and there exists no $a_1 \in \text{Im}~\eta$ such that $a \leq a_1 < a_0$. We show that \[\theta_a = A.\] 
		\noindent Let $x \in \theta_a$. Then, $\theta(x) \geq a $. This implies either $\theta(x) \geq a_0$ or $ a \leq \theta(x) < a_0$. If $\theta(x) \geq a_0$, then by the definition of $\theta$, \[x \in \eta_{a_0} \cup (A \setminus \eta_{a_0}) = A. \] 
		
		\noindent On the other hand, if $a \leq \theta(x) < a_0$, then $\theta(x) = \eta(x)$ and we have $a \leq \eta(x) < a_0$. However, this contradicts the assumption that there is no $a_1 \in \text{Im}~\eta$ such that $a \leq a_1 <a_0$. Consequently, $\theta(x) \geq a_0$ so that $x \in \theta_{a_0} $. But by Case 1,  $\theta_{a_0} = A$. This esablishes  that 
		\[\theta_a \subseteq A.\] 
		\noindent For the reverse inclusion, let $x \in A$. Then, $x \in \eta_{a_0} \cup (A \setminus \eta_{a_0})$. If $x \in \eta_{a_0}$, then $\theta(x) = \eta(x) \geq a_0 > a$. If $x \in A \setminus \eta_{a_0}$, then $\theta(x) = a_0 > a$. Thus 
		\begin{center}
		$x \in \theta_a$ for all $x \in A$.
		\end{center} 
		Hence $A \subseteq \theta_a$. 
	\end{case}
	\begin{case} $a < a_0$ and there exists $a_1 \in \text{Im}~\eta$ such that $a \leq a_1 < a_0$. We show that \[\theta_a = \eta_a.\] 
	\noindent  Since $\eta \subsetneq \theta$, $\eta_a \subseteq \theta_a$. For the reverse inclusion, let $x \in \theta_a$, that is, $\theta(x) \geq a$. Then, either $\theta(x) \geq a_0$ or $a \leq \theta(x) < a_0$. If $\theta(x) \geq a_0$, then $x \in \theta_{a_0} = A$ (in view of Case 1). Since $a_1 \in \text{Im}~\eta$ and $a_1 < a_0$,  $\eta_{a_0} \subsetneq \eta_{a_1} \subseteq \mu_{a_1}$. By theorem \ref{max_sup1}, $\eta_{a_1} = \mu_{a_1}$. Thus \[A \subsetneq \mu_{a_0} \subseteq \mu_{a_1} = \eta_{a_1} \subseteq \eta_a,\]
	\noindent and hence $x \in \eta_a$. On the other hand, if $\theta(x) < a_0$, then, by the definition of $\theta$, $\eta(x) = \theta(x) \geq a$ and hence $x \in \eta_a$. Therefore in either case, $x \in \eta_a$, so that $\theta_a \subseteq \eta_a.$ 
	\end{case}
	\begin{case} $a$ is incomparable to $a_0$. We show that \[\theta_a = \eta_a.\] 
	\noindent Since $\eta \subsetneq \theta$, hence $\eta_a \subseteq \theta_a$. For reverse inclusion, let $x \in \theta_a$. Since $a$ is incomparable with $a_0$,  $\theta(x) \neq a_0$. Hence by definition of $\theta$, \[\eta(x) = \theta(x) \geq a.\] 
	\noindent  Thus $x \in \eta_a$, so that $\theta_a  \subseteq \eta_a$.	
	\end{case}
	\noindent In all the cases, $\theta_a = \eta_a$ or $\theta_a = A$. Hence $\theta_a$ is a subgroup of $\mu_{a}$. Therefore by Theorem \ref{lev_sgp}, $\theta \in L(\mu)$. Consequently, there exists   $\theta \in L(\mu)$ such that $\eta \subsetneq \theta \subsetneq \mu$. However, this contradicts the maximality of $\eta$ in $\mu$. Hence the result. 
\end{proof}

The converse of Theorem \ref{max_sup2} does not hold. This is illustrated in the following example:

\begin{example}
	Let G be the quaternian group $Q_8$. Let $C=\{1,-1\}$  be the center of  $G$ and $H=\{\pm 1, \pm i\}$. Let the evaluation lattice $L$ be the chain given by :
	$$
	L:  0 < a < b < c < 1.
	$$
	Define $L$-subsets $\mu$ and $\eta$ of $G$ as follows:
	
	$$
	\mu (x)=\left \{ \begin{array}{ll}
	1 & {\rm if } \ x\in C,\\
	c & {\rm if } \ x\in H\setminus C,\\
	a & {\rm if} \ x\in G\setminus H;
	\end{array}
	\right.
	$$
	and
	$$
	\eta (x)=\left \{ \begin{array}{ll}
	1 & {\rm if } \ x \in C,\\
	a & {\rm if } \ x\in G\setminus C.\\
	\end{array}
	\right.
	$$

	\noindent Since the non-empty level subsets of $\eta$ and $\mu$ are subgroups of $G$,  $\eta$ and $\mu$ are $L$-subgroups of $G$. As $\eta \subseteq \mu$, $\eta$ is an $L$-subgroup of $\mu.$
	Note that $\text{Im}~\mu \cup \text{Im}~\eta = \{1,a,c\}$. Next, we observe that 
	\begin{align*}
	\eta_{a} &= ~G~ = \mu_{a},\\
	\eta_{1} &= ~C ~=\mu_{1};
	\end{align*}
	and, 	$\eta_{c} = C \subsetneq \mu_{c} = H$ is maximal. Thus  there exists exactly one $t_0 \in \text{Im}~\mu$ such that  $\eta_{t_0}$ is a maximal subgroup of $\mu_{t_0}$ and for all other $ t \in \text{Im}~\mu \cup \text{Im}~\eta $, $\eta_t = \mu_t$. However, $\eta$ is not a maximal $L$-subgroup of $\mu$. For, define an $L$-subset $\theta$ of $G$ as follows:
	$$
	\theta (x)=\left \{ \begin{array}{ll}
	1 & {\rm if } \ x\in C,\\
	b & {\rm if } \ x\in H\setminus C,\\
	a & {\rm if} \ x\in G\setminus H.
	\end{array}
	\right.
	$$
	\noindent Then, $\theta \in L(\mu)$ and $\eta \subsetneq \theta \subsetneq \mu$. 	
\end{example}

Below, we provide a sufficient condition for an $L$-subgroup to be maximal.

\begin{theorem}
	\label{max_suff}
	Let $\eta \in L(\mu)$ such that $\eta(e) = \mu(e)$ and there exists exactly one $a_0 \leq \mu(e)$ satisfying $\eta_{a_0}$ is a maximal subgroup of $\mu_{a_0}$ and for all other $a \leq \mu(e)$, $\eta_{a} = \mu_{a}$. Then, $\eta$ is a maximal $L$-subgroup of $\mu$. 
\end{theorem}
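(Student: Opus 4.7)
The plan is to argue by contradiction: assume there exists $\theta \in L(\mu)$ with $\eta \subsetneq \theta \subseteq \mu$, and show this forces $\theta = \mu$, thereby establishing maximality of $\eta$. First I would observe that $\eta$ is indeed a proper $L$-subgroup of $\mu$: the hypothesis that $\eta_{a_0}$ is a (proper) maximal subgroup of $\mu_{a_0}$ gives some $x \in \mu_{a_0}$ with $\eta(x) < a_0 \leq \mu(e) = \eta(e)$, so $\eta$ is non-constant and distinct from $\mu$.

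Since $\eta \subsetneq \theta$, pick $x \in G$ with $\eta(x) < \theta(x)$, and set $b = \theta(x)$. Because $\theta \subseteq \mu$ and the tip of $\mu$ is at $e$, we have $b \leq \mu(x) \leq \mu(e)$. From $x \in \mu_b \setminus \eta_b$ we conclude $\eta_b \subsetneq \mu_b$, so by the hypothesis we must have $b = a_0$. Hence $x \in \theta_{a_0} \setminus \eta_{a_0}$, which gives the strict inclusion $\eta_{a_0} \subsetneq \theta_{a_0}$. By Theorem \ref{lev_sgp}, $\theta_{a_0}$ is a subgroup of $\mu_{a_0}$, and since $\eta_{a_0}$ is maximal in $\mu_{a_0}$, this forces $\theta_{a_0} = \mu_{a_0}$.

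Next I would show that $\theta_a = \mu_a$ for every $a \leq \mu(e)$. For $a = a_0$ this is already done. For $a \leq \mu(e)$ with $a \neq a_0$, the hypothesis gives $\eta_a = \mu_a$, and the sandwich $\eta \subseteq \theta \subseteq \mu$ together with the monotonicity of level sets yields $\mu_a = \eta_a \subseteq \theta_a \subseteq \mu_a$, so $\theta_a = \mu_a$.

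Finally, to conclude $\theta = \mu$, take any $y \in G$, set $c = \mu(y)$, and note $c \leq \mu(e)$. Then $y \in \mu_c = \theta_c$, so $\theta(y) \geq c = \mu(y)$; combined with $\theta \subseteq \mu$, this gives $\theta(y) = \mu(y)$. Hence $\theta = \mu$, contradicting $\theta \subsetneq \mu$ (which would follow from any assumption of strict containment), and so no $\theta$ strictly between $\eta$ and $\mu$ exists. I expect the only delicate point to be Step 2, namely reading off from $\eta(x) < \theta(x)$ that the offending level value must be exactly $a_0$; once one recognizes that $\eta_b \subsetneq \mu_b$ forces $b = a_0$ by the uniqueness clause in the hypothesis, the rest is routine level-set bookkeeping.
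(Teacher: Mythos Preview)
Your proof is correct and follows essentially the same line as the paper's: pick $x$ with $\eta(x)<\theta(x)$, set $b=\theta(x)$, observe that $\eta_b\subsetneq\mu_b$ forces $b=a_0$, use maximality of $\eta_{a_0}$ to get $\theta_{a_0}=\mu_{a_0}$, and then conclude $\theta_a=\mu_a$ for all $a\le\mu(e)$, hence $\theta=\mu$. You additionally spell out why $\eta$ is a proper $L$-subgroup and why equality of all level sets below $\mu(e)$ gives $\theta=\mu$, both of which the paper leaves implicit; note, however, that your argument is really direct rather than by contradiction, so the closing sentence would read more cleanly as ``hence any $\theta$ with $\eta\subsetneq\theta\subseteq\mu$ satisfies $\theta=\mu$.''
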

\begin{proof}
	Suppose there exists $\theta \in L(\mu)$ such that $\eta \subsetneq \theta \subseteq \mu$. Then, there exists $x_0 \in G$ such that $\eta(x_0) < \theta(x_0)$. Let $\theta(x_0) = b$. Then, $\eta_b \subsetneq \theta_b \subseteq \mu_b$. Since $\eta_a = \mu_a$ for all $a \neq a_0$, we must have $b = a_0$. Hence $\eta_{a_0} \subsetneq \theta_{a_0} \subseteq \mu_{a_0}$. By hypothesis, $\eta_{a_0}$ is a maximal subgroup of $\mu_{a_0}$. Hence $\theta_{a_0} = \mu_{a_0}$.	Thus $\theta_a = \mu_a$ for all $a \leq \mu(e)$ and we conclude that $\theta = \mu$. 
\end{proof}

The following theorem extends a well known result of classical group theory to the $L$-setting. Here we note that for $\eta \in L^{\mu}$ and $a_x \in \mu$, $\langle \eta, a_x \rangle$ is defined as the $L$-subgroup generated by $\eta \cup a_x$.

\begin{theorem}
	\label{max_lpt}
	Let $\eta\in L(\mu)$. Then, $\eta$ is maximal in $\mu$ if and only if $\langle \eta, a_x \rangle = \mu$ for all $L$-points $a_x\in\mu$ such that $a_x\notin\eta$.
\end{theorem}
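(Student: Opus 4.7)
The proof follows the template of the classical group-theoretic argument, with $L$-points playing the role of elements outside the subgroup. I would handle the two implications separately.

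For the forward direction, assume $\eta$ is maximal in $\mu$ and pick an $L$-point $a_x \in \mu$ with $a_x \notin \eta$. First I would verify that $\langle \eta, a_x\rangle \in L(\mu)$: since $a_x \subseteq \mu$ (that is what $a_x \in \mu$ means) and $\eta \subseteq \mu$, we get $\eta \cup a_x \subseteq \mu$, and because $\mu$ itself is an $L$-subgroup of $G$ containing $\eta \cup a_x$, the smallest $L$-subgroup containing $\eta \cup a_x$ must lie inside $\mu$. Thus $\eta \subseteq \langle \eta, a_x\rangle \subseteq \mu$, and the hypothesis $a_x \notin \eta$ combined with $a_x \in \langle \eta, a_x\rangle$ rules out $\langle \eta, a_x\rangle = \eta$. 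The maximality of $\eta$ then forces $\langle \eta, a_x\rangle = \mu$.

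For the reverse direction, suppose the condition on $L$-points holds and assume for contradiction that there is some $\theta \in L(\mu)$ with $\eta \subsetneq \theta \subseteq \mu$. I would pick $x \in G$ with $\eta(x) < \theta(x)$ and set $b = \theta(x)$. Then $b_x$ is an $L$-point that belongs to $\theta$ (since $\theta(x) \geq b$) and hence to $\mu$, but it does not belong to $\eta$ (since $\eta(x) < b$). By hypothesis, $\langle \eta, b_x\rangle = \mu$. On the other hand, $\eta \subseteq \theta$ and $b_x \in \theta$ imply $\eta \cup b_x \subseteq \theta$, and since $\theta \in L(\mu) \subseteq L(G)$, the minimality of $\langle \eta, b_x\rangle$ yields $\langle \eta, b_x\rangle \subseteq \theta$. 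Combining these two, $\mu \subseteq \theta$, forcing $\theta = \mu$. This contradicts $\theta \subsetneq \mu$ had we assumed strict containment; the clean formulation is that the only $\theta$ strictly between $\eta$ and $\mu$ is $\theta = \mu$, so $\eta$ is maximal.

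The main subtlety, and the only step that requires care, is keeping the distinction between the $L$-subset $a_x$ and its membership relation in an $L$-subset $\nu$ (namely $\nu(x) \geq a$) straight, and noticing that both $a_x \subseteq \mu$ (when $a_x \in \mu$) and $a_x \in \theta$ (when one wants $\langle \eta, a_x\rangle \subseteq \theta$) follow from the same defining inequality. Beyond this bookkeeping, no use of sup-property, jointly supstar hypotheses, or level-subset machinery is needed; the argument rests entirely on the definition of the generated $L$-subgroup as the smallest $L$-subgroup containing the given $L$-subset.
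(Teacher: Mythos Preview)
Your proof is correct and follows essentially the same approach as the paper's: both handle the forward direction by observing that $\eta \subsetneq \langle \eta, a_x \rangle \subseteq \mu$ and invoking maximality, and both handle the converse by picking $x_0$ with $\eta(x_0) < \theta(x_0)$, setting $a = \theta(x_0)$, and squeezing $\mu = \langle \eta, a_{x_0} \rangle \subseteq \theta \subseteq \mu$. Your version is slightly more explicit in justifying $\langle \eta, a_x \rangle \subseteq \mu$, which the paper leaves implicit.
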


\begin{proof}
	Let $\eta$ be a maximal $L$-subgroup of $\mu$ and let $a_x\in\mu$ such that $a_x\notin\eta$. Then, for all $y \in G$,
	\[ (\eta \cup a_x) (y) = \begin{cases}
	\eta(x) \vee a & \text{if }y=x,\\
	\eta(y) & \text{if }y \neq x.\\
	\end{cases} 
	\]
	Hence $\eta \subsetneq \langle \eta, a_x \rangle$. Since $\eta$ is maximal in $\mu$, we must have $\langle \eta, a_x \rangle = \mu$.
	
	\noindent Conversely, let $\langle \eta, a_x\rangle = \mu$ for all $L$-points $a_x \in \mu$ such that $a_x \notin \eta$. Let $\theta \in L(\mu)$ such that $\eta \subsetneq \theta \subseteq \mu$. Then, for some $x_0 \in G$, $\eta(x_0)<\theta(x_0)\leq\mu(x_0)$. Let $a=\theta(x_0)$. Then, $a_{x_0}\in \mu$ and $a_{x_0}\notin\eta$. By the hypothesis, $\langle \eta, a_{x_0} \rangle = \mu$. Since, $\eta \subsetneq \theta$ and $a_{x_0} \in \theta$, we get $\eta\cup a_{x_0} \subseteq \theta$. Hence $\langle \eta, a_{x_0} \rangle \subseteq \theta$.  Thus
	\[ \mu = \langle \eta, a_{x_0} \rangle \subseteq \theta \subset \mu. \] 
\end{proof}	

\begin{theorem}
	\label{max_hom}
	Let $f : G \rightarrow H$ be a group isomorphism. Let $\mu \in L(G)$ and $\nu \in L(H)$.
	\begin{enumerate}
		\item[{(i)}] If $\eta$ is a maximal $L$-subgroup of $\mu$, then $f(\eta)$ is a maximal $L$-subgroup of $f(\mu)$.
		\item[{(ii)}] If $\theta$ is a maximal $L$ subgroup of $\nu$, then $f^{-1}(\theta)$ is a maximal $L$-subgroup of $f^{-1}(\nu)$.
	\end{enumerate}
\end{theorem}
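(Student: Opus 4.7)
The plan is to exploit the bijectivity of $f$ in both directions, converting each potential intermediate $L$-subgroup on one side into an intermediate $L$-subgroup on the other via the $L$-preimage/image. The key tools are Theorem~\ref{hom_gp} (images and preimages of $L$-subgroups under homomorphisms are $L$-subgroups) together with Proposition~\ref{hom}, which yields $f^{-1}(f(\mu)) = \mu$ (from injectivity) and $f(f^{-1}(\nu)) = \nu$ (from surjectivity). Throughout, I will also use that for a bijection $f$, $f$ and $f^{-1}$ preserve proper inclusions of $L$-subsets.

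For part $(i)$, I would first verify that $f(\eta)$ is a proper $L$-subgroup of $f(\mu)$: membership in $L(H)$ is Theorem~\ref{hom_gp}, $f(\eta) \subseteq f(\mu)$ follows from $\eta \subseteq \mu$, and non-constancy together with $f(\eta) \ne f(\mu)$ follow from bijectivity of $f$ (since $f^{-1}(f(\eta)) = \eta \ne \mu = f^{-1}(f(\mu))$). Next, suppose $\xi \in L(f(\mu))$ with $f(\eta) \subseteq \xi \subseteq f(\mu)$. Applying $f^{-1}$ and invoking Theorem~\ref{hom_gp} and injectivity yields $\eta = f^{-1}(f(\eta)) \subseteq f^{-1}(\xi) \subseteq f^{-1}(f(\mu)) = \mu$ with $f^{-1}(\xi) \in L(\mu)$. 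Maximality of $\eta$ in $\mu$ forces $f^{-1}(\xi) = \eta$ or $f^{-1}(\xi) = \mu$; applying $f$ (using surjectivity so that $f(f^{-1}(\xi)) = \xi$) gives $\xi = f(\eta)$ or $\xi = f(\mu)$, establishing maximality.

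Part $(ii)$ is symmetric: I would first check that $f^{-1}(\theta)$ is a proper $L$-subgroup of $f^{-1}(\nu)$, using Theorem~\ref{hom_gp} for membership in $L(G)$, the definition of preimage for non-constancy (if $\theta(y_1) \ne \theta(y_2)$ then $f^{-1}(\theta)(f^{-1}(y_i)) = \theta(y_i)$ differ), and bijectivity for $f^{-1}(\theta) \ne f^{-1}(\nu)$. Then, given any $\xi \in L(f^{-1}(\nu))$ with $f^{-1}(\theta) \subseteq \xi \subseteq f^{-1}(\nu)$, applying $f$ yields $\theta = f(f^{-1}(\theta)) \subseteq f(\xi) \subseteq f(f^{-1}(\nu)) = \nu$ with $f(\xi) \in L(\nu)$; maximality of $\theta$ in $\nu$ gives $f(\xi) \in \{\theta, \nu\}$, and applying $f^{-1}$ (using injectivity for $f^{-1}(f(\xi)) = \xi$) gives $\xi \in \{f^{-1}(\theta), f^{-1}(\nu)\}$.

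I do not anticipate a genuine obstacle: the proof is essentially a transport-of-structure argument, and the only point requiring care is the bookkeeping of ``proper'' (non-constant and not equal to the parent) on both sides, which is handled cleanly by bijectivity together with the identities $f^{-1}\circ f = \mathrm{id}$ on $L^G$ and $f \circ f^{-1} = \mathrm{id}$ on $L^H$ coming from Proposition~\ref{hom}(ii)--(iii). Note that surjectivity is what makes part $(i)$ go through and injectivity is what makes part $(ii)$ go through, so the full isomorphism hypothesis is used essentially; a mere homomorphism would not suffice.
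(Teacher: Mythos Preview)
Your proposal is correct and follows essentially the same approach as the paper: pull back (respectively push forward) an intermediate $L$-subgroup via $f^{-1}$ (respectively $f$), invoke maximality, and then use the identities from Proposition~\ref{hom}(ii)--(iii) coming from bijectivity to return. You are in fact slightly more careful than the paper in explicitly checking that $f(\eta)$ and $f^{-1}(\theta)$ are \emph{proper} $L$-subgroups, a point the paper's proof leaves implicit.
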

\begin{proof}
	\begin{enumerate}
	\item[{(i)}] Suppose there exists $\sigma \in L(f(\mu))$ such that $f(\eta) \subseteq \sigma \subseteq f(\mu)$.	By Proposition \ref{hom}, $\eta \subseteq f^{-1}(\sigma)$ and $f^{-1}(\sigma) \subseteq \mu$. Thus
	\[ \eta \subseteq f^{-1}(\sigma) \subseteq \mu. \]
	By Theorem \ref{hom_gp}, $f^{-1}(\sigma) \in L(\mu)$. Since $\eta$ is a maximal $L$-subgroup of $\mu$, either $f^{-1}(\sigma) = \eta$ or $f^{-1}(\sigma) = \mu$. Since $f$ is a surjection, $\sigma = f(f^{-1}(\sigma))$. Thus either $\sigma = f(\eta)$ or $\sigma = f(\mu)$. 
	\item[{(ii)}] Suppose there exists $\tau \in L(\mu)$ such that $f^{-1}(\theta) \subseteq \tau \subseteq f^{-1}(\nu)$. By Proposition \ref{hom}, $\theta \subseteq f(\tau)$ and $f(\tau) \subseteq \nu$. Thus
	\[ \theta \subseteq f(\tau) \subseteq \nu. \]
	By Theorem \ref{hom_gp}, $f(\tau) \in L(\nu)$. Since $\theta$ is a maximal $L$-subgroup of $\nu$, either $f(\tau) = \theta$ or $f(\tau) = \nu$. Since $f$ is injective, $\tau = f^{-1}(f(\tau))$. Thus either $\tau = f^{-1}(\theta)$ or $\tau = f^{-1}(\nu)$. Hence the result.
	\end{enumerate}
\end{proof}

\section{Frattini $L$-subgroup of an $L$-group}

In this section, we apply the notion of maximal $L$-subgroups to develop the notion of Frattini $L$-subgroups like their classical counterparts.  

\begin{definition}
	Let $\mu \in L(G)$. The Frattini $L$-subgroup $\Phi(\mu)$ of $\mu$ is defined to be the intersection of all maximal $L$-subgroups of $\mu$.
	
	If $\mu$ has no maximal $L$-subgroups, then we set $\Phi(\mu)=\mu$. 
\end{definition}

\begin{example}
	\label{eg1}
	Let $G=D_8$, where $D_8$ denotes the dihedral group of order 8, that is,
	\[D_8= \langle r,s~|~r^4=s^2=e,~rs=sr^{-1} \rangle. \]
	Let the evaluation lattice $L$ be the chain of five elements given by
	\[ L : 0<a<b<c<1. \] 
	\noindent Let $C=\{e, r^2\}$ be the center of $D_8$ and $K=\{e, r^{2}, s, sr^{2} \}$ be the Klein-4 subgroup of $D_8$.
	Define $\mu : G \rightarrow L$ as follows:
	\[ \mu(x) = \begin{cases}
	1 & \text{if } x=e,\\
	c & \text{if } x \in C \setminus \{e\},\\
	b & \text{if } x \in K \setminus C,\\
	a & \text{if } x \in G \setminus K.\\
	\end{cases} \]
	Since each non-empty level subset $\mu_t$ is a subgroup of $G$, by Theorem \ref{lev_gp}, $\mu \in L(G)$. We determine the Frattini $L$-subgroup of $\mu$. For this, we firstly determine all the maximal $L$-subgroups of $\mu$. Now, define the following $L$-subsets of $D_{8}$ : 
	
		\[ \eta_{1}(x) = \begin{cases}
		1 & \text{if } x=e,\\
		b & \text{if } x \in K \setminus \{e\}, \\
		a & \text{if } x \in G \setminus K;
		\end{cases}  \]
	
		\[  \eta_{2}(x) = \begin{cases}
		1 & \text{if } x=e,\\
		c & \text{if } x \in C \setminus \{e\},\\
		a & \text{if } x \in G \setminus C;\\
		\end{cases}  \]

		\[  \eta_{3}(x) = \begin{cases}
		1 & \text{if } x=e,\\
		c & \text{if } x \in C \setminus \{e\},\\
		b & \text{if } x \in K \setminus C,\\
		0 & \text{if } x \in G \setminus K.
		\end{cases}  \]

\noindent Clearly, $\eta_{i}\subseteq \mu$ for each $i$. Moreover, each non-empty level subset $(\eta_{i})_{t}$ is a subgroup of $\mu_{t}$, so by Theorem \ref{lev_sgp}, $\eta_{i} \in L(\mu)$ for each $i$. Further, observe that $\eta_{i}(e)=\mu(e)$ for each $i$ and 
\begin{align*}
(\eta_{1})_{c} \text{ is a maximal subgroup of } \mu_{c}   \text{~~and~~} (\eta_{1})_{t} &= \mu_{t}   \text{~~for all~~} t \in L \setminus \{c\},\\
(\eta_{2})_{b} \text{ is a maximal subgroup of } \mu_{b}  \text{~~and~~} (\eta_{2})_{t} &= \mu_{t}   \text{~~for all~~} t \in L \setminus \{b\},\\
(\eta_{3})_{a} \text{ is a maximal subgroup of } \mu_{a}  \text{~~and~~} (\eta_{3})_{t} &= \mu_{t}   \text{~~for all~~} t \in L \setminus \{a\}.
\end{align*}
By Theorem \ref{max_suff}, each $\eta_i$ is a maximal $L$-subgroup of $\mu$. Next, we show that $\eta_i$ are the only maximal $L$-subgroups of $\mu$ satisfying $\eta_i(e) = \mu(e)$. Suppose $\theta$ is a maximal $L$-subgroup of $\mu$ with $\theta(e) = \mu(e)$. Clearly, $\theta$ and $\mu$ are jointly supstar. Hence by theorem \ref{max_sup2}, there exists exactly one $t_0 \in \text{Im}~\mu$ such that $\theta_{t_0}$ is a maximal $L$-subgroup of $\mu_{t_0}$ and for all other $t \in \text{Im}~\theta \cup \text{Im}~\mu$, $\theta_t = \mu_t$. Note that $\text{Im}~\mu = \{a, b, c, 1\}.$ Firstly, observe $t_0\neq 1$, for if $t_0 = 1$, then $\mu_1 = \{e\}$ and $\mu_{t_0}$ has no maximal subgroups. Now, the following can be easily verified:
\begin{align*}
	\text{if } t_0 &= c, \text{ then } \theta = \eta_1;\\
	\text{if } t_0 &= b, \text{ then } \theta = \eta_2;\\
	\text{if } t_0 &= a, \text{ then } \theta = \eta_3.
\end{align*}
Consequently, $\eta_1, \eta_2$ and $\eta_3$ are the only maximal $L$-subgroups of $\mu$ such that $\eta_i(e) =\mu(e)$. 

\noindent Finally, define $\eta_4 : G \rightarrow L$ by
\[ \eta_4(x) = \begin{cases}
c & \text{if } x \in C, \\
b & \text{if } x \in K \setminus C,\\
a & \text{if } x \in G \setminus K.\\
\end{cases}  \]
Then, $\eta_4(e) \neq \mu(e)$ and by the definition of $\eta_4$ and $\mu$, it is evident that $\eta_4$ is a maximal $L$-subgroup of $\mu$. Thus we have determined all the maximal $L$-subgroups of $\mu$. Consequently, the Frattini $L$-subgroup of $\mu$ is given by:
\[  \Phi(\mu) =
\begin{cases}
c & \text{if } x=e,\\
b & \text{if } x \in C \setminus \{e\},\\
a & \text{if } x \in K \setminus C,\\
0 & \text{if } x \in G \setminus K.
\end{cases}   \] 
\end{example}

In classical group theory, the Frattini subgroup has an interesting relation to the concept of non-generators. In fact, the Frattini subgroup $\Phi(G)$ of a group $G$ turns out to be the subgroup of all  non-generators of $G$. Here, we introduce the definition of a non-generator of an $L$-group $\mu$ and establish its  above mentioned relation with the Frattini $L$-subgroup like their classical counterparts.
\begin{definition} An $L$-point $a_x\in\mu$ is said to be a non-generator of $\mu$ if, whenever $\langle \eta, a_x \rangle = \mu$ for  $\eta\in L^{\mu}$, then $\langle \eta \rangle = \mu$.
\end{definition}

Below, we prove that the set of all non-generators of $\mu$ is an $L$-subgroup of $\mu$:
\begin{theorem}
	Let $\mu \in L(G)$. Then,
	\[	\bigcup \{	a_x \mid a_x \text{ is a non generator of } \mu	\}	\]
	is an $L$-subgroup of $\mu$.
\end{theorem}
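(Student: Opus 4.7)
Denote $N = \bigcup\{a_x \mid a_x \text{ is a non-generator of } \mu\}$, so that for each $x \in G$,
\[ N(x) = \bigvee\{a \in L \mid a_x \text{ is a non-generator of } \mu\}. \]
Since every non-generator $a_x$ satisfies $a \leq \mu(x)$, the inclusion $N \subseteq \mu$ is immediate. The plan is then to verify the two $L$-subgroup conditions $N(x^{-1}) = N(x)$ and $N(xy) \geq N(x)\wedge N(y)$.

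For the inverse axiom, I would exploit the observation that for any $\eta \in L^\mu$ and any $L$-point $a_x$, the identity $\langle \eta, a_x\rangle = \langle \eta, a_{x^{-1}}\rangle$ holds, because every $L$-subgroup containing $a_x$ must also contain $a_{x^{-1}}$ and vice versa. This immediately gives that $a_x$ is a non-generator of $\mu$ if and only if $a_{x^{-1}}$ is, whence $N(x^{-1}) = N(x)$.

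For the product axiom, I would reduce to the pointwise claim: \emph{if $a_x$ and $b_y$ are non-generators of $\mu$, then $(a\wedge b)_{xy}$ is also a non-generator}. Once this is established, the infinite meet distributive law, which is available in any completely distributive lattice, yields
\[ N(x) \wedge N(y) = \bigvee_{a_x,\, b_y \text{ non-gen}} (a \wedge b), \]
and each summand is $\leq N(xy)$ by the claim, giving $N(x) \wedge N(y) \leq N(xy)$. To prove the claim, I would suppose $\langle \eta, (a\wedge b)_{xy}\rangle = \mu$ for some $\eta \in L^\mu$ and observe that $\langle \eta \cup a_x \cup b_y\rangle$, being an $L$-subgroup containing $a_x$ and $b_y$, must assign $xy$ a value at least $a\wedge b$ and hence contain $(a\wedge b)_{xy}$. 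This forces $\mu \subseteq \langle\langle \eta, a_x\rangle, b_y\rangle$. Applying the non-generator property of $b_y$ to the $L$-subset $\langle \eta, a_x\rangle$ yields $\langle \eta, a_x\rangle = \mu$, and then applying the non-generator property of $a_x$ to $\eta$ yields $\langle \eta \rangle = \mu$, as desired.

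The main obstacle I expect is the product-closure reduction: the cascade argument that first promotes $(a\wedge b)_{xy}$ into the larger generated $L$-subgroup and then peels off $a_x$ and $b_y$ one at a time by successively invoking their non-generator property. The inverse axiom is nearly formal, and the distributive manipulation is routine, but this chaining argument is where care is needed, in particular in using $\langle \eta, a_x\rangle$ itself as the test $L$-subset when the non-generator hypothesis on $b_y$ is applied.
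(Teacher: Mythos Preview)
Your proposal is correct and follows essentially the same approach as the paper: both reduce the product axiom to the claim that $(a\wedge b)_{xy}$ is a non-generator whenever $a_x$ and $b_y$ are, prove this by absorbing $(a\wedge b)_{xy}$ into $\langle \eta, a_x, b_y\rangle$ and then peeling off $b_y$ and $a_x$ in turn via their non-generator property, and handle the inverse axiom through the identity $\langle \eta, a_x\rangle = \langle \eta, a_{x^{-1}}\rangle$. The only cosmetic difference is that you apply the non-generator hypothesis on $b_y$ to the $L$-subset $\langle \eta, a_x\rangle$ while the paper applies it to $\eta \cup a_x$, but since $\langle \langle \eta, a_x\rangle, b_y\rangle = \langle \eta, a_x, b_y\rangle$ this is the same argument.
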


\begin{proof}
	Let $\lambda = \bigcup\{a_x \mid a_x \text{ is a non-generator of } \mu\}$. Firstly, we show \begin{center}if $a_x$ and $b_y$ are non-generators of $\mu$, then  $a_x \circ b_y $ is also a non-generator of $\mu$.\end{center}
	So, let $a_x$ and $b_y  \in \mu$ be non-generators and suppose $\eta \in L^{\mu}$ such that
	\[\langle \eta, a_x \circ b_y  \rangle = \mu. \]
	 Then, as $a_x, b_y \in \langle \eta,a_x,b_y \rangle$, we have 
	\[ \langle \eta,a_x,b_y \rangle (xy) \geq \langle \eta,a_x,b_y \rangle (x) \wedge \langle \eta,a_x,b_y \rangle (y) \geq a \wedge b.	\]
	This implies 
	\[	a_x \circ b_y =(a \wedge b)_{xy} \in \langle \eta,a_x,b_y \rangle.	\]
	Therefore
	\[ \eta \cup a_x \circ b_y \subseteq \langle \eta,a_x,b_y \rangle.	\]
	Hence it follows that 
	\[	\mu = \langle \eta,a_x \circ b_y \rangle \subseteq \langle \eta,a_x,b_y \rangle =\mu.	\]
	In view of the fact that $a_x$ and $b_y$  are non-generators of $\mu$, it follows that
	\[ \mu = \langle \eta,a_x \circ b_y \rangle = \langle \eta,a_x,b_y \rangle = \langle \eta,a_x \rangle = \langle \eta \rangle. \]
	This proves the claim. Next, to  show that $\lambda$ is an $L$-subgroup of $\mu$, consider 
	\begin{equation*}
	\begin{split}
	\lambda(xy) & = \vee \{c \mid c_{xy} \text{ is  a  non-generator of } \mu \} \\
	& \geq \vee \{ a \wedge b \mid a_x \text{ and } b_y \text{ are  non-generators of } \mu \} \\
	& \geq \{ \vee \{ a \mid a_x \text{ is a non-generator of }\mu\} \} \wedge \{ \vee \{	b \mid b_y \text{ is a non-generator of }\mu\} \}  \\
		&~~~~~~~~~~~~~~~~~~~~~~~~~~~~~~~~~~~~~~~~~~~~~~~~~~~~~~ (\text{as $L$ is a completely distributive lattice})\\
	& = \lambda(x) \wedge \lambda(y)
	\end{split}
	\end{equation*} 
	Next, we show that
	\[	\langle \eta,a_{x^{-1}} \rangle  = \langle \eta,a_x \rangle.	\]
	Note that $a_x \in \langle \eta, a_x \rangle$. Hence
	\[ \langle \eta, a_x \rangle (x^{-1}) = \langle \eta, a_x \rangle (x) \geq a. \]
	This implies $a_{x^{-1}} \in \langle \eta, a_x \rangle$ so that $\langle \eta, a_{x^{-1}} \rangle \subseteq \langle \eta, a_x \rangle$.
	Similarly, $\langle \eta, a_x \rangle \subseteq  \langle \eta, a_{x^{-1}} \rangle$. Thus
	\[ \langle \eta, a_{x^{-1}} \rangle = \langle \eta, a_x \rangle. \]
	Hence $ \lambda(x^{-1}) = \lambda(x)$ for all $x \in G$. Consequently, $\lambda \in L(\mu)$.
\end{proof}

Recall that a chain is said to be upper well ordered if  every non-empty subset of the given chain has a supremum. Clearly, every subset of an upper well ordered chain is a supstar subset. Consequently, by Proposition \ref{supchar},
each $L$-subset $\eta$ of an upper well ordered chain $L$ satisfies sup-property. In fact, we have the following :

\label{new}
\begin{proposition}
	Let $L$ be an upper well ordered chain. Then, any family $\{\eta_{i}\}_{i \in I} \subseteq L^\mu$ is a supstar family.  
	
\end{proposition}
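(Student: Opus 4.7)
The plan is a one-step reduction to the definitions. By the definition of a supstar family, it suffices to show that $X := \bigcup_{i \in I} \text{Im}~\eta_i$ is a supstar subset of $L$; since $X \subseteq L$, this will follow immediately from the (already-noted) observation that every subset of an upper well ordered chain is itself a supstar subset. So the entire argument reduces to a chain-level fact about $L$ alone, with the family $\{\eta_i\}_{i \in I}$ playing no role beyond packaging the subset $X$.

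To verify this reduced claim, I would fix an arbitrary non-empty $A \subseteq X$. Since $A$ is a non-empty subset of the upper well ordered chain $L$, $A$ has a supremum in $L$, and in the upper well ordered setting that supremum is attained inside $A$ itself (equivalently, $A$ has a greatest element). Consequently $\sup A \in A$, so $X$ is a supstar subset of $L$, and therefore $\{\eta_i\}_{i \in I}$ is a supstar family. The only point requiring any care -- the mildest ``obstacle'' -- is to recognise that the upper well ordering of $L$, as it is being used here, forces the supremum of every non-empty subset to lie in the subset (and not merely to exist somewhere in $L$); this is exactly the convention that already underlies the equivalence between the sup-property and closure under arbitrary suprema in Proposition~\ref{supchar}. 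Once this convention is made explicit, the proof is essentially a tautology.
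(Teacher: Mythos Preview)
Your proposal is correct and matches the paper's approach exactly: the paper does not give a separate proof of this proposition, treating it as an immediate consequence of the preceding sentence ``Clearly, every subset of an upper well ordered chain is a supstar subset,'' which is precisely the reduction you spell out. Your added remark about the convention---that in an upper well ordered chain the supremum of a non-empty subset must lie in the subset, not merely in $L$---is a welcome clarification, since the paper's phrasing ``every non-empty subset \ldots\ has a supremum'' is ambiguous on its face (the lattice being complete, every subset has a supremum in $L$ anyway).
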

\begin{lemma}
	\label{zrn}
	Let $L$ be an upper well-ordered chain and let $\mu$ be an $L$-subgroup of $G$. Suppose that $\theta \in L(\mu)$ and let $a_x$ be an $L$-point of $\mu$ such that $a_x \notin \theta$. Then, there exists $\eta \in L(\mu)$ such that $\eta$ is maximal with respect to the conditions $\theta \subseteq \eta$ and $a_x \notin \eta$.
\end{lemma}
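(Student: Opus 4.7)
The plan is to set up a standard Zorn's Lemma argument on the family
\[
\mathcal{F} \;=\; \{\sigma \in L(\mu) \mid \theta \subseteq \sigma \text{ and } a_x \notin \sigma\},
\]
partially ordered by inclusion. The family is non-empty since $\theta \in \mathcal{F}$ by hypothesis. A maximal element of $\mathcal{F}$ is exactly the $\eta$ required. Hence everything reduces to showing that every chain in $\mathcal{F}$ has an upper bound in $\mathcal{F}$.

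Given a chain $\{\sigma_i\}_{i \in I} \subseteq \mathcal{F}$, I would define $\sigma = \bigcup_{i \in I}\sigma_i$, i.e.\ $\sigma(y) = \bigvee_{i \in I}\sigma_i(y)$ for each $y\in G$, and verify three things: $\sigma \in L(\mu)$, $\theta \subseteq \sigma$, and $a_x \notin \sigma$. The containments $\theta \subseteq \sigma \subseteq \mu$ are immediate from $\theta \subseteq \sigma_i \subseteq \mu$ for every $i$. The decisive input is the Proposition just before the lemma: since $L$ is upper well-ordered, the family $\{\sigma_i\}$ is supstar, which means that for each fixed $y \in G$ the subset $\{\sigma_i(y) : i \in I\}$ of $L$ contains its supremum. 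So for any $y$ there exists $i(y) \in I$ with $\sigma_{i(y)}(y) = \sigma(y)$.

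To check that $\sigma$ is an $L$-subgroup, fix $x_1,x_2 \in G$ and pick indices $i_1,i_2$ attaining the suprema at $x_1$ and $x_2$ respectively; since $\{\sigma_i\}$ is a chain under inclusion, one of $\sigma_{i_1}, \sigma_{i_2}$ contains the other, say $\sigma_{i_1} \subseteq \sigma_{i_2}$. Then $\sigma_{i_2}(x_1) = \sigma(x_1)$ and $\sigma_{i_2}(x_2) = \sigma(x_2)$, so
\[
\sigma(x_1 x_2) \;\geq\; \sigma_{i_2}(x_1 x_2) \;\geq\; \sigma_{i_2}(x_1) \wedge \sigma_{i_2}(x_2) \;=\; \sigma(x_1)\wedge \sigma(x_2).
\]
The invariance $\sigma(y^{-1}) = \sigma(y)$ follows directly from $\sigma_i(y^{-1}) = \sigma_i(y)$ for each $i$. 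For the crucial exclusion $a_x \notin \sigma$: choose $i_0$ with $\sigma_{i_0}(x) = \sigma(x)$; since $a_x \notin \sigma_{i_0}$ and $L$ is a chain, $\sigma_{i_0}(x) < a$, so $\sigma(x) < a$ and hence $a_x \notin \sigma$.

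Thus $\sigma \in \mathcal{F}$ is an upper bound of the chain, and Zorn's Lemma furnishes a maximal element $\eta$, which is the desired $L$-subgroup. The main obstacle in the argument is precisely the exclusion $a_x \notin \sigma$ for the union $\sigma$: a general chain union of $L$-subgroups could push $\sigma(x)$ up to $a$ in the limit. This is exactly where the upper well-ordered hypothesis is used — it guarantees that the supremum $\sigma(x)$ is actually attained by some $\sigma_{i_0}(x)$, which is strictly below $a$.
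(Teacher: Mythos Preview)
Your proposal is correct and follows essentially the same Zorn's Lemma argument as the paper: the same family $\mathcal{F}$, the same chain-union upper bound, and the same use of upper well-orderedness to ensure $a_x \notin \sigma$. The only minor technical variation is in verifying that $\sigma$ is an $L$-subgroup: the paper uses directly that $L$ is a chain (so each $\theta_i(x)\wedge\theta_i(y)$ equals one of the two values), whereas you first invoke the supstar property to realize the suprema at $x_1,x_2$ in a single $\sigma_{i_2}$ via the chain ordering on $\{\sigma_i\}$; both arguments are valid and yield the same conclusion.
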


\begin{proof}
	Consider the set $S = \{ \nu \in L(\mu) \mid \theta \subseteq \nu \text{ and } a_x \notin \nu \}$. Then, $S$ is non-empty since $\theta \in S$. Also, $S$ is partially ordered by the $L$-set inclusion  $\subseteq$. Let $C = \{ \theta_i \}_{i \in I}$ be a chain in $S$. Then, we claim that 
	\[\bigcup_{i \in I} \theta_i \in S. \]  
	Firstly, we show that $\bigcup_{i \in I} \theta_i \in L(\mu)$. For this, let $x, y \in G$ and consider
	\begin{align*}
	\bigcup_{ {i \in I}} \theta_{i}~(xy) &= \bigvee_{i\in I} \theta_{i}(xy)\\
	&\geq \bigvee_{i\in I} \{\theta_{i}(x) \wedge \theta_{i}(y)\}\\
	&= \bigvee_{i\in I} \theta_{i}(x) \text{~or~} \bigvee_{i\in I} \theta_{i}(y) \end{align*}~~~~~~~~~~~~~~~~~~~~~~~~~~~~~~~~~~~~~~~~~~~~~~~~~~~~~~~~~~~~~~~~~~ \text{(as $L$ is a chain, $\theta_{i}(x) \wedge \theta_{i}(y) = \theta_{i}(x) $  or $\theta_{i}(y) $)}
	\begin{align*}
	~~~~~~~~~~~~~~~~~~~~~~~	&\geq \left\{ \bigvee_{i\in I} \{\theta_{i}(x)\} \right\} \bigwedge \left\{\bigvee_{i\in I} \{\theta_{i}(y)\}\right\}\\
	&= \bigcup_{{i \in I}}\theta_i(x) \bigwedge \bigcup_{{i \in I}}\theta_i(y).
	\end{align*}	
	As $\theta_{i} \in L(\mu)$, it follows that 
	$$\bigcup_{ {i \in I}} \theta_{i}~(x^{-1})=\bigcup_{ {i \in I}} \theta_{i}~(x).$$
	\noindent Now, it is clear that $\theta \subseteq \bigcup_{i \in I} \theta_i$. Also, since $L$ is upper well-ordered and $a_x \notin \theta_i$ for all $i \in I$, $a_x \notin \bigcup_{i \in I} \theta_i$. Hence $\bigcup_{i \in I} \theta_i \in S$ so that every chain in  $S$ has an upper bound. Therefore by Zorn's lemma, $S$ has a maximal element $\eta$. This proves the result.
\end{proof}

\begin{theorem}
	\label{fra_lpt}
	Let $\mu \in L(G)$ and let $\lambda$ be the $L$-subgroup of non-generators of $\mu$. Then,
	\[	\lambda \subseteq \Phi(\mu). \]
	The equality holds if $L$ is an upper well ordered chain.
\end{theorem}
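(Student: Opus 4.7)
I plan to prove the two inclusions separately; the first holds without any restriction on $L$, while the reverse inclusion makes essential use of Lemma \ref{zrn}, which rests on the upper well ordered hypothesis.

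For the inclusion $\lambda\subseteq\Phi(\mu)$, if $\mu$ has no maximal $L$-subgroups then $\Phi(\mu)=\mu$ by definition and the inclusion is trivial. Otherwise, fix a non-generator $a_x$ of $\mu$ and an arbitrary maximal $L$-subgroup $M$ of $\mu$; I want $a_x\in M$. Suppose not. Theorem \ref{max_lpt} gives $\langle M,a_x\rangle=\mu$, and the non-generator property of $a_x$ then forces $\langle M\rangle=\mu$. Since $M\in L(\mu)$, we have $\langle M\rangle=M$, so $M=\mu$, contradicting the properness of $M$. Thus $a_x$ lies in every maximal $L$-subgroup, so $a_x\in\Phi(\mu)$.

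For the reverse inclusion when $L$ is an upper well ordered chain, let $a_x\in\Phi(\mu)$ and suppose $\langle\eta,a_x\rangle=\mu$ for some $\eta\in L^{\mu}$; the goal is $\langle\eta\rangle=\mu$. Argue by contradiction, so $\langle\eta\rangle\ne\mu$. First observe $a_x\notin\langle\eta\rangle$, for otherwise $\eta\cup a_x\subseteq\langle\eta\rangle$ would give $\langle\eta\rangle=\langle\eta,a_x\rangle=\mu$. Applying Lemma \ref{zrn} with the $L$-subgroup $\langle\eta\rangle$ and the $L$-point $a_x$ produces $\nu\in L(\mu)$ maximal subject to $\langle\eta\rangle\subseteq\nu$ and $a_x\notin\nu$.

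The core step is to upgrade $\nu$ to a maximal $L$-subgroup of $\mu$. If $\theta\in L(\mu)$ satisfies $\nu\subsetneq\theta\subseteq\mu$, then the Zorn-type maximality of $\nu$ forces $a_x\in\theta$; combining this with $\nu\supseteq\langle\eta\rangle\supseteq\eta$ gives
\[ \theta\supseteq\langle\nu\cup a_x\rangle\supseteq\langle\eta\cup a_x\rangle=\langle\eta,a_x\rangle=\mu, \]
so $\theta=\mu$. Moreover $\nu\ne\mu$ because $a_x\in\mu\setminus\nu$. To confirm that $\nu$ is a proper $L$-subgroup in the paper's sense (non-constant), I would argue separately: if $\nu$ were constant $\equiv c$ with $c<a$, then for $x\ne e$ the $L$-subset equal to $c$ off $e$ and $\mu(e)$ at $e$ is an $L$-subgroup of $\mu$ strictly enlarging $\nu$ while still excluding $a_x$, contradicting Zorn maximality; the case $x=e$ is handled by a parallel construction built on some $d_y$ with $y\ne e$ and $c<d\le\mu(y)<a$, and by a direct structural analysis of $\mu$ if no such intermediate value exists. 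This constant sub-case is the main technical obstacle. Once $\nu$ is known to be a maximal $L$-subgroup of $\mu$, the desired contradiction is immediate: $a_x\in\Phi(\mu)\subseteq\nu$ yet $a_x\notin\nu$. Hence $\langle\eta\rangle=\mu$, which shows $a_x$ is a non-generator and so $\Phi(\mu)\subseteq\lambda$.
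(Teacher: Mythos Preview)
Your argument is essentially identical to the paper's: the first inclusion via Theorem~\ref{max_lpt}, and the reverse inclusion via Lemma~\ref{zrn} followed by upgrading the Zorn-maximal element to a maximal $L$-subgroup and deriving a contradiction with $a_x\in\Phi(\mu)$. The only difference is your added concern about verifying that the Zorn-maximal $\nu$ is non-constant (so as to be ``proper'' in the paper's sense); the paper's own proof simply omits this check and asserts maximality directly after establishing the covering property, so your extra caution---though left incomplete in the $x=e$ branch---is a refinement rather than a deviation.
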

\begin{proof}
	Let $a_x \in \mu$ be a non-generator of $\mu$ and let $\eta$ be a maximal subgroup of $\mu$. Suppose, if possible, that $a_x \notin \eta$. Then, by Theorem \ref{max_lpt}, $\langle \eta,a_x \rangle =\mu$. However, since $a_x$ is a non-generator of $\mu$, we get
	\[	\mu = \langle \eta,a_x \rangle = \langle \eta \rangle = \eta = \mu,	\]
	which contradicts the assumption that $\eta$ is a maximal $L$-subgroup of $\mu$. Thus $a_x \in \eta$ for all maximal subroups of $\mu$. It follows that 
	\[ \lambda(x) = \bigcup\{a_x \mid a_x \text{ is a non-generator of } \mu\} \subseteq \Phi(\mu).	\]
	Next, suppose that $L$ is an upper well ordered chain and let $x \in G$. Let $b = \Phi(\mu)(x)$. Then, $b_x \in \Phi(\mu)$. We show that $b_x$ is a non-generator of $\mu$. Suppose, if possible, that there exists $\eta \in L^{\mu}$ such that $\mu = \langle \eta, b_x \rangle$ and $\mu \neq \langle \eta \rangle$. Then, $b_x \notin \langle \eta \rangle$ and hence by Lemma \ref{zrn}, there exists an $L$-subgroup $\theta$ of $\mu$ which is maximal subject to the conditions $\langle \eta \rangle \subseteq \theta$ and $b_x \notin \theta$. We show that $\theta$ is a maximal $L$-subgroup of $\mu$. 
	
	\noindent If $\theta \subsetneq \nu \subseteq \mu$ for some $\nu \in L(\mu)$, then $\eta \subseteq \langle \eta \rangle \subseteq \nu$. Since $\theta$ is maximal with respect to the conditions that $\langle \eta \rangle \subseteq \theta$ and $b_x \notin \theta$, we must have $b_x \in \nu$. This implies that $\mu = \langle \eta, b_x \rangle \subseteq \nu$ and hence $ \nu = \mu$. Consequently, $\theta$ is a maximal $L$-subgroup of $\mu$. But by the maximality of $\theta$, it follows that
	\[ b_x \in \Phi(\mu) = \bigcap \{ \eta_i \mid \eta_i \text{ is a maximal $L$ -subgroup of } \mu \} \subseteq \theta, \]
	contradicting the assumption that $b_x \notin \theta$. Hence $b_x$ is a non-generator of $\mu$. Therefore
	\[	\lambda(x) = \bigvee\{a \mid a_x \text{ is a non-generator of } \mu\} \geq b = \Phi(\mu)(x). 	\]	 
\end{proof}

In the following example, we construct the Frattini $L$-subgroup $\Phi(\mu)$ of an $L$-group $\mu$ by using the concept of non-generators:
 
\begin{example}
	\label{eg2}
	Consider $\mu \in L(G)$ as given in Example \ref{eg1}. Firstly, we note that $L$ being a finite chain is upper well-ordered. We determine the $L$-subgroup of non-generators $\lambda$ of $\mu$.
	\noindent We show that $\lambda(r^2) = b$.
	For this, we claim that $b_{r^2}$ is a non-generator of $\mu$. Let $\theta \in L^{\mu}$ such that $\langle \theta, b_{r^2} \rangle = \mu$. Since $L$ is a finite chain, $\theta \cup b_{r^2}$ possesses the sup-property. Hence by Theorem \ref{gen_sup}, 
	\[ \langle (\theta \cup b_{r^2})_c \rangle = \langle \theta, b_{r^2} \rangle_c = \langle \mu \rangle_c = \mu_c = \{ e, r^2 \} = \langle r^2 \rangle.  \]
	This implies that $r^2 \in (\theta \cup b_{r^2})_c$. Now, 
	\[ (\theta \cup b_{r^2})(r^2) = \theta(r^2) \vee b \geq c > b. \]
	Hence we must have $\theta(r^2) \geq c$, that is, $c_{r^2} \in \theta$, which implies that $\theta \cup b_{r^2} = \theta$. Thus $\langle \theta \rangle = \mu$. We conclude that $b_{r^2}$ is a non-generator of $\mu$. Hence 
	\begin{equation} 
	\lambda(r^2) \geq b.
	\end{equation} 
	\noindent Next, we show that $c_{r^2}$ is not a non-generator of $\mu$. It can be easily seen that $\theta : G \rightarrow L$ given by
	\[ \theta(x) = \begin{cases}
	1 & \text{if } x=e,\\
	b & \text{if } x \in K \setminus \{e\},\\
	a & \text{if } x \in G \setminus K\\
	\end{cases} \]
	is an $L$-subgroup of $\mu$ such that $\langle \theta,c_{r^2} \rangle = \mu$ but $\langle \theta \rangle = \theta \neq \mu$. Hence $c_{r^2}$ is not a non-generator of $\mu$. Thus
	\begin{equation}
	\lambda(r^2) < c.
	\end{equation}
	From (1) and (2), we conclude that $\lambda(r^2) = b$. By similar calculations, all the non-generators of $\mu$ can be determined, and we get
	\begin{equation*}
	\begin{split}
	\lambda(\mu) &= \begin{cases}
	c & \text{if } x=e, \\
	b & \text{if } x \in C \setminus \{e\},\\
	a & \text{if } x \in K \setminus C, \\
	0 & \text{if } x \in G \setminus K
	\end{cases}\\
	&= \Phi(\mu),
	\end{split}
	\end{equation*}
	as determined in Example \ref{eg1}.  
\end{example}

\begin{proposition}
	Let $L$ be an upper well ordered chain and let $\mu \in L(G)$ such that $\Phi(\mu)(e) = \mu(e)$. Then, $\Phi(\mu_b) \subseteq (\Phi(\mu))_b$ for all $b \in \text{Im}~\mu$.
\end{proposition}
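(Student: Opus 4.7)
The plan is to open both sides of the claimed inclusion: write $(\Phi(\mu))_b$ as an intersection of level subsets using Proposition~\ref{int_lev}, and compare each factor against $\Phi(\mu_b)$ by using Theorem~\ref{max_sup2} to translate each maximal $L$-subgroup of $\mu$ into its unique ``active'' level where it cuts out a classical maximal subgroup. The trivial case in which $\mu$ has no maximal $L$-subgroups is disposed of immediately, since then $\Phi(\mu) = \mu$ by definition and $(\Phi(\mu))_b = \mu_b \supseteq \Phi(\mu_b)$.

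So assume $\mu$ admits maximal $L$-subgroups, and let $\{\eta_i\}_{i\in I}$ be the family of all of them, so $\Phi(\mu)=\bigcap_i \eta_i$ and $(\Phi(\mu))_b=\bigcap_i (\eta_i)_b$ by Proposition~\ref{int_lev}. The hypothesis $\Phi(\mu)(e)=\mu(e)$ combined with $\eta_i(e)\le\mu(e)$ forces $\eta_i(e)=\mu(e)$ for every $i$. Since $L$ is an upper well ordered chain, Proposition~\ref{new} gives that $\mu$ and each $\eta_i$ form a jointly supstar family, so Theorem~\ref{max_sup2} applies to every $\eta_i$: there is a unique $a_{0,i}\in\text{Im}\,\mu$ such that $(\eta_i)_{a_{0,i}}$ is a maximal subgroup of $\mu_{a_{0,i}}$ and $(\eta_i)_a=\mu_a$ for all other $a\in\text{Im}\,\mu\cup\text{Im}\,\eta_i$.

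Now fix $b\in\text{Im}\,\mu$ and split by $i$. If $b=a_{0,i}$, then $(\eta_i)_b$ is a classical maximal subgroup of $\mu_b$, hence contains $\Phi(\mu_b)$ by the classical definition. If $b\ne a_{0,i}$, then $(\eta_i)_b=\mu_b\supseteq\Phi(\mu_b)$ trivially. Either way $\Phi(\mu_b)\subseteq(\eta_i)_b$, and intersecting over $i$ yields $\Phi(\mu_b)\subseteq\bigcap_i(\eta_i)_b=(\Phi(\mu))_b$.

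The only real obstacle is pinning down that each maximal $L$-subgroup of $\mu$ has tip exactly $\mu(e)$, which is precisely what enables the use of Theorem~\ref{max_sup2}; this is where the hypothesis $\Phi(\mu)(e)=\mu(e)$ is used, and without it one could have maximal $L$-subgroups whose level subsets at $b$ bear no clean relation to maximal subgroups of $\mu_b$. Once that is in hand, the remainder is routine level-set bookkeeping, with Proposition~\ref{int_lev} commuting intersection with level and Theorem~\ref{max_sup2} producing the desired dichotomy.
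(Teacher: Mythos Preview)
Your proof is correct and follows essentially the same route as the paper's: use $\Phi(\mu)(e)=\mu(e)$ to force $\eta(e)=\mu(e)$ for every maximal $L$-subgroup $\eta$, invoke Theorem~\ref{max_sup2} (via the jointly supstar condition from the upper well ordered hypothesis) to obtain the dichotomy at level $b$, and then intersect using Proposition~\ref{int_lev}. Your version is slightly more careful in that you explicitly dispose of the degenerate case where $\mu$ has no maximal $L$-subgroups, which the paper's proof omits.
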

\begin{proof}
	Let $b \in \text{Im}~\mu$ and let $\eta$ be a maximal $L$-subgroup of $\mu$. Since $L$ is upper well ordered, $\eta$ and $\mu$ are jointly supstar. Moreover, $\Phi(\mu)(e) = \mu(e)$ implies that $\eta(e) = \mu(e)$. By theorem \ref{max_sup2}, there exists exactly one $a_0 \in \text{Im}~\mu$ such that $\eta_{a_0}$ is a maximal subgroup of $\mu_{a_0}$ and for all other $a \in \text{Im}~\mu \cup \text{Im}~\eta$, $\eta_a = \mu_a$. Hence either $\eta_b$ is a maximal subgroup of $\mu_b$ or $\eta_b = \mu_b$. In both the cases, $\Phi(\mu_b) \subseteq \eta_b$. Since $\eta$ is any arbitrary maximal $L$-subgroup of $\mu$, we get 
	\[ \begin{split}
	\Phi(\mu_b) & \subseteq \bigcap \{ \eta_b \mid \eta \text{ is a maximal $L$-subgroup of } \mu \}\\
	&= \left( \bigcap \{ \eta \mid \eta \text{ is a maximal $L$-subgroup of } \mu \} \right)_b ~~~~~ \text{(by Proposition \ref{int_lev})}\\
	&= (\Phi(\mu))_b.
	\end{split} \]
\end{proof}

\begin{example}
	Consider the $L$-group $\mu$ of example \ref{eg1}. Then, $\Phi(\mu)$ is given by 
	\[ \Phi(\mu) =
	\begin{cases}
	c & \text{if } x=e,\\
	b & \text{if } x \in C \setminus \{e\},\\
	a & \text{if } x \in K \setminus C,\\
	0 & \text{if } x \in G \setminus K.
	\end{cases}   \]
	Hence we see that $(\Phi(\mu))_b = \{ e, r^2 \}$. However, $\mu_b = K$ and thus $\Phi(\mu_b) = \{ e \}$. This shows that $(\Phi(\mu))_b \nsubseteq \Phi(\mu_b)$.  
\end{example}

\begin{lemma}
	\label{lpt_con}
	Let $\mu$ be a normal $L$-subgroup of $G$. If $a_x$ is a non-generator of $\mu$, then $a_{gxg^{-1}}$ is a non-generator of $\mu$ for all $g \in G$.
\end{lemma}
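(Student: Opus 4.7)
The plan is to transport the non-generator property from $a_x$ to $a_{gxg^{-1}}$ via the inner automorphism $\phi_g : G \to G$ given by $\phi_g(y) = gyg^{-1}$, combined with Theorem \ref{gen_hom2} applied to $\phi_g$ (which is a group homomorphism, in fact an automorphism). First I would observe that normality of $\mu$ yields $\mu(gxg^{-1}) = \mu(x) \ge a$, so $a_{gxg^{-1}} \in \mu$, which is required for the statement to even be meaningful. The same normality identity gives $\phi_g^{-1}(\mu)(y) = \mu(gyg^{-1}) = \mu(y)$, so $\phi_g^{-1}(\mu) = \mu$; in particular, $\phi_g^{-1}$ carries $L^{\mu}$ into $L^{\mu}$ and Theorem \ref{gen_hom2} is legitimately applicable to any $\theta \in L^{\mu}$.

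Now suppose $\eta \in L^{\mu}$ satisfies $\langle \eta, a_{gxg^{-1}} \rangle = \mu$; I must show $\langle \eta \rangle = \mu$. The key computation is the pointwise identity
$$\phi_g^{-1}\bigl(\eta \cup a_{gxg^{-1}}\bigr) \;=\; \phi_g^{-1}(\eta) \cup a_x,$$
which follows from the fact that, for any $y \in G$, one has $a_{gxg^{-1}}(gyg^{-1}) = a$ exactly when $y = x$ (by injectivity of $\phi_g$) and $0$ otherwise, so that the $L$-point $a_{gxg^{-1}}$ pulls back under $\phi_g$ precisely to $a_x$. Applying Theorem \ref{gen_hom2} to $\eta \cup a_{gxg^{-1}} \in L^{\mu}$ gives
$$\bigl\langle \phi_g^{-1}(\eta), a_x \bigr\rangle \;=\; \bigl\langle \phi_g^{-1}(\eta \cup a_{gxg^{-1}}) \bigr\rangle \;=\; \phi_g^{-1}\bigl(\langle \eta, a_{gxg^{-1}} \rangle\bigr) \;=\; \phi_g^{-1}(\mu) \;=\; \mu.$$

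Since $a_x$ is by hypothesis a non-generator of $\mu$, this forces $\langle \phi_g^{-1}(\eta) \rangle = \mu$. One more application of Theorem \ref{gen_hom2} gives $\phi_g^{-1}(\langle \eta \rangle) = \langle \phi_g^{-1}(\eta) \rangle = \mu = \phi_g^{-1}(\mu)$, and since $\phi_g$ is bijective (so that $\phi_g^{-1}$ is injective on $L^{G}$) we conclude $\langle \eta \rangle = \mu$. There is no substantive obstacle here: the entire argument is a conjugation-transport once normality is used to ensure $\phi_g^{-1}(\mu) = \mu$; the only delicate point is the bookkeeping for the pointwise identity $\phi_g^{-1}(a_{gxg^{-1}}) = a_x$, which must be checked carefully so that the generating set transforms exactly as desired.
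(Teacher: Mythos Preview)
Your proof is correct, and the core idea coincides with the paper's: both arguments transport the non-generator property through conjugation, and indeed the paper's auxiliary $L$-subset $\theta(z)=\eta(gzg^{-1})$ is precisely your $\phi_g^{-1}(\eta)$. The difference is in execution. The paper works by contradiction and, rather than invoking Theorem~\ref{gen_hom2}, essentially re-derives its content by hand: it uses the level-subset formula of Theorem~\ref{gen} to verify directly that $\langle \eta, a_{gxg^{-1}}\rangle(y)=\langle \theta, a_x\rangle(g^{-1}yg)$ and $\langle \eta\rangle(y)=\langle \theta\rangle(g^{-1}yg)$, via an explicit factorisation argument on words $y=y_1\cdots y_n$. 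Your route is shorter and cleaner because it recognises that these identities are exactly what Theorem~\ref{gen_hom2} (preimages commute with generation) already packages; once $\phi_g^{-1}(\mu)=\mu$ and $\phi_g^{-1}(a_{gxg^{-1}})=a_x$ are checked, two applications of that theorem finish the job without any level-subset bookkeeping. The paper's explicit computation has the minor pedagogical advantage of being self-contained at the level of Theorem~\ref{gen}, but your argument is the natural one given the tools available in the preliminaries.
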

\begin{proof}
	Let $a_x$ be a non-generator of $\mu$ and let $g \in G$. Suppose, if possible, that $a_{gxg^{-1}}$ is not a non-generator of $\mu$. Then, there exists an $L$-subset $\eta$ of $\mu$ such that $\langle \eta, a_{gxg^{-1}} \rangle = \mu$ but $\langle \eta \rangle \neq \mu$.
	
	\noindent Define $\theta : G \rightarrow L$ as follows:
	\[ \theta(z) = \eta(gzg^{-1}) ~~~~~ \text{for all $z \in G$.} \]
	Then, $\theta \in L^{\mu}$, for if $\theta(z) > \mu(z)$ for some $z \in G$, then $\eta(gzg^{-1}) > \mu(z) = \mu(gzg^{-1})$, which contradicts $\eta \in L^{\mu}$. Also,
	\[ \text{tip }\theta = \bigvee_{z \in G}\theta(z) = \bigvee_{z \in G}\eta(gzg^{-1}) = \text{tip }\eta. \]
	We claim that whenever $y \in \langle (\eta \cup a_{gxg^{-1}})_c \rangle$ for some $c \in L$, then $g^{-1}yg \in \langle (\theta \cup a_x)_c \rangle$.
	
	\noindent Let $c \in L$ and let $y \in \langle (\eta \cup a_{gxg^{-1}})_c \rangle$. Then, 
	\[ y = y_1y_2\ldots y_n, \text{ where } y_i \text{ or } {y_i}^{-1} \in (\eta \cup a_{gxg^{-1}})_c. \]
	Then,
	\[ g^{-1}yg = (g^{-1}y_1g)(g^{-1}y_2g)\ldots(g^{-1}y_ng). \]
	Note that for all $1 \leq i \leq n$,
	\[ (\eta \cup a_{gxg^{-1}})(y_i) = \begin{cases}
	\eta(y_i) & \text{if } y_i \neq gxg^{-1},\\
	\eta(y_i) \vee a & \text{if } y_i = gxg^{-1}
	\end{cases} \]
	and in view of the definition of $\theta$, we obtain
	\begin{align*}
	(\theta \cup a_x)(g^{-1}y_ig) &= \begin{cases}
	\theta(g^{-1}y_ig) & \text{if }g^{-1}y_ig \neq x, \\
	\theta(g^{-1}y_ig) \vee a & \text{if } g^{-1}y_ig =x
	\end{cases} \\
	&= \begin{cases}
	\eta(y_i) & \text{if } y_i \neq gxg^{-1}, \\
	\eta(y_i) \vee a & \text{if } y_i = gxg^{-1}.
	\end{cases} \\
	\end{align*}
	Hence 
	\[ (\eta \cup a_{gxg^{-1}})(y_i) = (\theta \cup a_x)(g^{-1}y_ig) ~~~~~ \text{ for all } 1 \leq i \leq n. \]
	This implies that $y \in \langle (\eta \cup a_{gxg^{-1}})_c \rangle$ if and only if $g^{-1}yg \in \langle (\theta \cup a_x)_c \rangle$ for all $c \in L$. By theorem \ref{gen}, for $z \in G$, 
	\[ \langle \eta, a_{gxg^{-1}} \rangle (z)= \bigvee_{c \leq a_0} \{ c \mid z \in \langle (\eta \cup a_{gxg^{-1}})_c \rangle  \}, \]
	where $a_0 = \text{tip}(\eta \cup a_{gxg^{-1}}) = \text{tip}(\theta \cup a_x)$.
	Therefore $\langle \eta, a_{gxg^{-1}} \rangle (y) = \langle \theta, a_x \rangle (g^{-1}yg).$
	Similarly, $u \in \langle \eta_c \rangle $ for some $c \in L$ if and only if
	\[ u = u_1u_2 \ldots u_m \text{ where } u_i \text{ or } {u_i}^{-1} \in \eta_c, \]
	if and only if 
	\[ g^{-1}ug = (g^{-1}u_1g)(g^{-1}u_2g) \ldots (g^{-1}u_mg). \]
	By using similar arguments as above, it can be verified that $u_i \in \eta_c$ if and only if $g^{-1}u_ig \in \theta_c$, and we get $y \in \langle \eta_c \rangle$ if and only if $g^{-1}yg \in \langle \theta_c \rangle$.
	Thus $\langle \eta \rangle (y) = \langle \theta \rangle (g^{-1}yg)$.  
	
	\noindent Now, we show that $ \langle \theta, a_x \rangle = \mu.$ Let $y \in G$ and let $b = \mu(y)$. Since $\mu$ is a normal $L$-subgroup of $G$, $\mu(gyg^{-1}) = \mu(y) = b$, which gives $b_{gyg^{-1}} \in \mu$. Since $ \langle \eta, a_{gxg^{-1}} \rangle = \mu$, $b_{gyg^{-1}} \in \langle \eta, a_{gxg^{-1}} \rangle$. Thus $\langle \eta, a_{gxg^{-1}} \rangle (gyg^{-1}) \geq b$, which implies that $\langle \theta, a_x \rangle(y) \geq b = \mu(y)$. Hence we conclude that $\langle \theta, a_x \rangle = \mu$.
	
	\noindent Next, since $ \langle \eta \rangle \neq \mu$, there exists an $y \in G$ such that $\mu(y) > \langle \eta \rangle (y)$. Then, $\langle \theta \rangle (g^{-1}yg) = \langle \eta \rangle (y)$, which implies that $\langle \theta \rangle (g^{-1}yg) < \mu(y) = \mu(g^{-1}yg)$. Thus $\langle \theta \rangle \neq \mu$. Hence there exists $\theta \in L^{\mu}$ such that $\langle \theta, a_x \rangle = \mu$ but $\langle \theta \rangle \neq \mu$. This contradicts the assumption that $a_x$ is a non-generator of $\mu$. Hence the result.
\end{proof}

\begin{theorem}
	\label{fra_nor}
	Let $L$ be an upper well ordered chain and let $\mu$ be a normal $L$-subgroup of $G$. Then, $\Phi(\mu)$ is a normal $L$-subgroup of $\mu$.
\end{theorem}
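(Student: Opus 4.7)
The plan is to exploit the two pieces of machinery already in place: Theorem \ref{fra_lpt}, which (using the upper well-ordered hypothesis) identifies $\Phi(\mu)$ with the $L$-subgroup $\lambda$ of non-generators of $\mu$, and Lemma \ref{lpt_con}, which says that the property of being a non-generator is preserved under conjugation of the base point by any element of $G$. Together these two facts should give the required normality inequality almost immediately.

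I will first dispense with the degenerate case: if $\mu$ has no maximal $L$-subgroups, then $\Phi(\mu) = \mu$ by definition, and $\mu$ is itself a normal $L$-subgroup of $\mu$ because $\mu(yxy^{-1}) \geq \mu(y) \wedge \mu(x) \wedge \mu(y^{-1}) = \mu(x) \wedge \mu(y)$. Otherwise, $\Phi(\mu)$ is an intersection of maximal $L$-subgroups of $\mu$, hence an intersection of elements of $L(\mu)$, hence itself in $L(\mu)$. Thus the genuine task is to verify
\[
\Phi(\mu)(yxy^{-1}) \geq \Phi(\mu)(x) \wedge \mu(y) \quad \text{for all } x, y \in G.
\]

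Fix $x, y \in G$. By Theorem \ref{fra_lpt} (applicable because $L$ is upper well ordered), $\Phi(\mu) = \lambda$, where
\[
\lambda(z) = \bigvee \{a \in L \mid a_z \text{ is a non-generator of } \mu\} \quad \text{for } z \in G.
\]
Since $\mu$ is a normal $L$-subgroup of $G$, Lemma \ref{lpt_con} guarantees that whenever $a_x$ is a non-generator of $\mu$, so is $a_{yxy^{-1}}$. Hence
\[
\{a \in L \mid a_x \text{ is a non-generator of } \mu\} \subseteq \{a \in L \mid a_{yxy^{-1}} \text{ is a non-generator of } \mu\},
\]
and taking suprema yields $\lambda(x) \leq \lambda(yxy^{-1})$. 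This gives
\[
\Phi(\mu)(yxy^{-1}) \geq \Phi(\mu)(x) \geq \Phi(\mu)(x) \wedge \mu(y),
\]
which is the desired inequality.

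No serious obstacle is expected here, since the substantive work has been absorbed into Theorem \ref{fra_lpt} and Lemma \ref{lpt_con}; in particular, one does not even need the supremum defining $\lambda(x)$ to be attained, only the monotonicity of taking suprema under inclusion. It is worth noting, however, that the inequality obtained is in fact stronger than normality in $\mu$ alone: it says $\Phi(\mu)$ is invariant under conjugation by every element of $G$, with no appearance of $\mu(y)$ on the right-hand side.
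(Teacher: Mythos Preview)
Your proof is correct and follows essentially the same route as the paper: invoke Theorem~\ref{fra_lpt} to identify $\Phi(\mu)$ with the $L$-subgroup $\lambda$ of non-generators, then apply Lemma~\ref{lpt_con} to obtain conjugation invariance and hence the normality inequality. The only cosmetic difference is that the paper works with the single $L$-point $(\Phi(\mu)(x))_x$ and asserts directly that it is a non-generator (using that the supremum is attained in an upper well-ordered chain), whereas you argue via inclusion of the defining sets and monotonicity of suprema; as you correctly observe, your variant does not even need the supremum to be attained.
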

\begin{proof}
	Let $x, g \in G$. Then, $(\Phi(\mu)(x))_x \in \Phi(\mu)$. By Theorem \ref{fra_lpt}, $(\Phi(\mu)(x))_x$ is a non-generator of $\mu$. By Lemma \ref{lpt_con}, $(\Phi(\mu)(x))_{gxg^{-1}}$ is a non-generator of $\mu$. Hence $(\Phi(\mu)(x))_{gxg^{-1}} \in \Phi(\mu)$. Therefore we get
	\[ \Phi(\mu)(gxg^{-1}) \geq \Phi(\mu)(x) \geq \Phi(\mu)(x) \wedge \mu(g). \]
	Thus we conclude that $\Phi(\mu)$ is a normal $L$-subgroup of $\mu$.
\end{proof}

The following example illustrates the above theorem:

\begin{example}
	Consider the $L$-group $\mu$ given in Example \ref{eg1}. Then, since $L$ is a finite chain, it is upper well ordered. Also, since every non-empty level subset of $\mu$ is a normal subgroup of $G$, by Theorem \ref{lev_norgp}, $\mu$ is a normal $L$-subgroup of $G$. In the Example \ref{eg1}, $\Phi(\mu)$ is defined to be
	\[ \Phi(\mu) = \begin{cases}
	c & \text{if } x=e,\\
	b & \text{if } x \in C \setminus \{e\},\\
	a & \text{if } x \in K \setminus C,\\
	0 & \text{if } x \in G \setminus K.
	\end{cases} \]
	Note that $(\Phi(\mu))_t$ is a normal subgroup of $\mu_t$ for all non-empty level subsets. Hence by Theorem \ref{lev_norsgp}, $\Phi(\mu)$ is a normal $L$-subgroup of $\mu$. 
\end{example}

\begin{theorem}
	Let $f : G \rightarrow H$ be a group isomorphism and let $\mu \in L(G)$. Then,
	\[ f(\Phi(\mu)) \subseteq \Phi(f(\mu)). \]
\end{theorem}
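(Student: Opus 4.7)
The plan is to reduce this to Theorem \ref{max_hom}(ii), which tells us that under an isomorphism the preimage of a maximal $L$-subgroup of $f(\mu)$ is a maximal $L$-subgroup of $\mu$. Combined with the image/preimage formulas in Proposition \ref{hom}, this essentially writes itself.

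First I would dispose of the degenerate case where $f(\mu)$ admits no maximal $L$-subgroups. Then by definition $\Phi(f(\mu))=f(\mu)$, and since $\Phi(\mu)\subseteq\mu$ always holds, applying $f$ gives $f(\Phi(\mu))\subseteq f(\mu)=\Phi(f(\mu))$, as required.

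For the main case, assume $f(\mu)$ has at least one maximal $L$-subgroup. Let $\sigma$ be an arbitrary maximal $L$-subgroup of $f(\mu)$. By Theorem \ref{max_hom}(ii) (applied with $\nu=f(\mu)$), $f^{-1}(\sigma)$ is a maximal $L$-subgroup of $f^{-1}(f(\mu))$, and since $f$ is injective, Proposition \ref{hom}(ii) yields $f^{-1}(f(\mu))=\mu$. Thus $f^{-1}(\sigma)$ is a maximal $L$-subgroup of $\mu$, so $\mu$ itself has maximal $L$-subgroups and $\Phi(\mu)$ is the intersection of all of them. In particular, $\Phi(\mu)\subseteq f^{-1}(\sigma)$. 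Applying $f$ and invoking surjectivity via Proposition \ref{hom}(iii), we obtain $f(\Phi(\mu))\subseteq f(f^{-1}(\sigma))=\sigma$. Since this holds for every maximal $L$-subgroup $\sigma$ of $f(\mu)$, intersecting over all such $\sigma$ delivers
\[ f(\Phi(\mu))\subseteq \bigcap\{\sigma\mid \sigma \text{ is a maximal } L\text{-subgroup of } f(\mu)\}=\Phi(f(\mu)). \]

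There is no genuine obstacle: the argument is a clean application of Theorem \ref{max_hom}(ii) together with the bijectivity identities from Proposition \ref{hom}. The only point that warrants care is the edge case when $f(\mu)$ has no maximal $L$-subgroups, where one must invoke the convention $\Phi(f(\mu))=f(\mu)$ directly rather than reasoning via an intersection. Note also that equality need not hold in general, since $f(\Phi(\mu))$ ranges only over images of preimages of maximal $L$-subgroups of $f(\mu)$, and one cannot reverse the inclusion $f(\Phi(\mu))\subseteq\sigma$ without additional hypotheses.
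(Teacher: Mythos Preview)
Your proof is correct and follows essentially the same route as the paper: both arguments reduce the containment to Theorem~\ref{max_hom} together with the image/preimage identities of Proposition~\ref{hom}. The only cosmetic differences are that the paper pushes maximal $L$-subgroups of $\mu$ forward via part~(i) and then uses $f(\bigcap\eta_i)\subseteq\bigcap f(\eta_i)$, whereas you pull maximal $L$-subgroups of $f(\mu)$ back via part~(ii); your version also treats the degenerate case explicitly, which the paper omits.
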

\begin{proof}
	By Theorem \ref{max_hom}, $\eta$ is a maximal $L$-subgroup of $\mu$ if and only $f(\eta)$ is a maximal $L$-subgroup of $f(\mu)$. Thus
	\begin{align*}
		\begin{split}
		f(\Phi(\mu)) &= f \left( \bigcap \left\{ \eta_i \mid \eta_i \text{ is a maximal $L$-subgroup of } \mu \right\} \right) \\
		& \subseteq \bigcap \left\{ f(\eta_i) \mid \eta_i \text{ is a maximal $L$-subgroup of } \mu \right\} ~~~~~ \text{(By Proposition \ref{hom})} \\ 
		&= \Phi(f(\mu)).
		\end{split}
	\end{align*}
\end{proof}

\section{Conclusion}

After the concept of fuzzy subgroups was introduced by Rosenfeld, so far the researchers have studied the fuzzy subgroups and fuzzy subrings of a classical group and a classical ring, respectively. In our studies, we have shifted to the $L$-(fuzzy) subgroups where the parent structure is a $L$-(fuzzy) group instead of an ordinary group. This has resulted in the examination of various concepts such as nilpotent $L$-subgroup of an $L$-group, solvable $L$-subgroup of an $L$-group, normalizer of an $L$-group, etc. This paper carries forward this approach further by defining the concepts of maximal $L$-subgroups of an $L$-group, Frattini $L$-subgroup of an $L$-group and non-generators of an $L$-group.

The research in the discipline of fuzzy algebraic structures came to a standstill after  Tom Head's metatheorem and subdirect product theorem. This is because most of the concepts and results in the studies of fuzzy algebra could be established through simple applications of the metatheorem and the subdirect product theorem. However, the metatheorem and the subdirect product theorems are not applicable in the $L$-setting. Hence we suggest the researchers pursuing studies in these areas to investigate the properties of $L$-subalgebras of an $L$-algebra rather than $L$-subalgebras of classical algebra. 

As an application and motivation here we mention that
if we replace the lattice $L$ in our work by the closed unit interval $[0,1]$, then we retrieve the corresponding version of fuzzy group theory. Moreover, as an application of this theory
we also mention that if we replace the lattice $L$ by the two elements set $\{0, 1\}$, then the results of classical group theory follow as simple corollaries of the corresponding results of $L$-group theory. This way, the $L$-group theory provides
us a new language and a new tool for the study of the classical group theory. The classical group theory has been founded on abstract sets and therefore the language used for its development
is formal set theory. On the other hand, $L$-group theory expresses itself  through the language of (lattice valued) functions. This shift of study from the language of sets to the language of functions gives rise to new insights that are the main focus of our work.

\section*{Acknowledgements}
The second author of this paper was supported by the Junior Research Fellowship joinly funded by CSIR and UGC, India during the course of development of this paper.

\end{document}